\newtheorem{corollary}{Corollary}
\newtheorem{theorem}[corollary]{Theorem}
\newtheorem{fact}[corollary]{Fact}
\newtheorem{proposition}[corollary]{Proposition}
\newtheorem{lemma}[corollary]{Lemma}
\theoremstyle{definition}
\newtheorem{definition}[corollary]{Definition}
\newtheorem{remark}[corollary]{Remark}
\newcommand{\vek}[1]{\mathbf{#1}} 
\newcommand{\mat}[1]{\mathbf{#1}}
\newcommand{\abs}[1]{\lvert#1\rvert}
\newcommand{\whom}{\mathrm{w}_{\mathrm{hom}}}
\newcommand{\wham}{\mathrm{w}_{\mathrm{Ham}}}
\newcommand{\wlee}{\mathrm{w}_{\mathrm{Lee}}}
\newcommand{\tp}{\mathsf{T}}
\DeclareMathOperator{\Mat}{M}
\DeclareMathOperator{\rank}{rk} 
\DeclareMathOperator{\rad}{rad} 
\DeclareMathOperator{\soc}{soc}
\DeclareMathOperator{\Hom}{Hom}
\DeclareMathOperator{\PG}{PG}
\DeclareMathOperator{\GL}{GL}
\newcommand{\points}{\mathcal{P}}
\newcommand{\R}{\mathbb{R}} 
\newcommand{\Z}{\mathbb{Z}}
\newcommand{\N}{\mathbb{N}} 
\newcommand{\Q}{\mathbb{Q}}
\newcommand{\C}{\mathbb{C}}
\newcommand{\F}{\mathbb{F}}
\newcommand{\wrt}{w.\,r.\,t.}  
\newcommand{\ie}{i.\,e.}  
\newcommand{\oif}{\Longrightarrow}
\title[]{The Geometry of Homogeneous Two-Weight Codes}
\author{Thomas Honold}
\address{Thomas Honold\\
  Institute of Information and Communication Engineering\\
  Zhejiang University, Zheda Road\\
  310027 Hangzhou, China}
\email{honold@zju.edu.cn}
\thanks{Expanded version with proofs of \emph{Further Results on
    Homogeneous Two-Weight Codes}, which appeared in the OC2007
  conference proceedings.}
\date{}
\begin{document}

\begin{abstract}
  The results of \cite{emt:oc4graphs,emt:graphs} on linear homogeneous
  two-weight codes over finite Frobenius rings are exended in two
  ways: It is shown that certain non-projective two-weight codes give
  rise to strongly regular graphs in the way described in
  \cite{emt:oc4graphs,emt:graphs}. Secondly, these codes are used to
  define a dual two-weight code and strongly regular graph similar to
  the classical case of projective linear two-weight codes over finite
  fields \cite{calderbank-kantor86}.
\end{abstract}

\keywords{Codes over Frobenius rings, homogeneous weight, two-weight
  code, modular code, strongly regular graph, partial difference set}

\subjclass[2000]{Primary 94B05; Secondary 05E30, 05B10}

\maketitle

\section{Introduction}\label{sec:intro}

A finite ring $R$ is said to be a Frobenius ring if there exists a character
$\chi\in\widehat{R}=\Hom_\Z(R,\C^\times)$ whose kernel contains no
nonzero left (or right) ideal of $R$.
The (normalized) homogeneous weight $\whom\colon R\to\C$ on a finite
Frobenius ring $R$ is defined by
\begin{equation}
  \label{eq:whom}
  \whom(x)=1-\frac{1}{\abs{R^\times}}\sum_{u\in\R^\times}\chi(ux).
\end{equation}
(This does not depend on the choice of $\chi$.)
The function $\whom$ is the unique complex-valued function on $R$ satisfying
$\whom(0)=0$, $\whom(ux)=\whom(x)$ for $x\in R$, $u\in R^\times$ and
$\sum_{x\in I}\whom(x)=\abs{I}$ for all nonzero left ideals
$I\leq{}_RR$ (and their right counterparts).

The homogeneous weight on a finite Frobenius ring is a generalization of both
the Hamming weight on $\F_q$ ($\whom(x)=\frac{q}{q-1}\wham(x)$ for
$x\in\F_q$) and the Lee weight on $\Z_4$ ($\whom(x)=\wlee(x)$ for
$x\in\Z_4$). It was introduced in
\cite{ioana-werner97} for the case $R=\Z_m$ and generalized to
Frobenius rings in \cite{greferath-schmidt99,st:homippi}.

In \cite{emt:oc4graphs,emt:graphs} it was shown that a linear code $C$
over a finite Frobenius ring with exactly two nonzero homogeneous weights
and satisfying certain nondegeneracy conditions gives rise to a
strongly regular graph with $C$ as its set of vertices. In the
classical case $R=\F_q$ this result has been known for a long time and
forms part of the more general correspondence between projective linear
$[n,k]$ two-weight codes and $\{\lambda_1,\lambda_2\}$
difference sets over $\F_q$ and their (appropriately defined) duals
(cf.\cite{calderbank-kantor86,delsarte72}).

In this paper we generalize the results of
\cite{emt:oc4graphs,emt:graphs} to a larger class of homogeneous
two-weight codes (so-called modular two-weight codes) and establish
for these codes the classical correspondence (Theorems~3.2 and~5.7 of
\cite{calderbank-kantor86}) in full generality.

\section{A few properties of Frobenius rings and their 
homogeneous  weights}\label{sec:properties} 

For a subset $S$ of a ring $R$
let ${}^\perp S=\{x\in R;xS=0\}$, $S^\perp=\{x\in R;Sx=0\}$. Similarly,
for $S\subseteq R^n$ let ${}^\perp S=\{\vek{x}\in R^n;\vek{x}\cdot S=0\}$ and
$S^\perp=\{\vek{x}\in R^n;S\cdot\vek{x}=0\}$, where
$\vek{x}\cdot\vek{y}=x_1y_1+\dots+x_ny_n$.

\begin{proposition}
  \label{prop:same-shape}
  A finite ring $R$ is a Frobenius ring iff for every matrix
  $\mat{A}\in R^{m\times n}$ the left row space
  $C=\{\vek{x}\mat{A};\vek{x}\in R^m\}$ and the right column space
  $D=\{\mat{A}\vek{y};\vek{y}\in R^n\}$ have the same cardinality.
\end{proposition}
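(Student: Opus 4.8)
The plan is to reformulate the condition ``$\abs C=\abs D$ for every matrix $\mat A$'' as a single statement about $\Hom$-modules, and then read off both implications.

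Fix $\mat A\in R^{m\times n}$. It induces a left $R$-linear map $f\colon{}_RR^m\to{}_RR^n$, $\vek x\mapsto\vek x\mat A$, with image $C$, and a right $R$-linear map $g\colon R^n_R\to R^m_R$, $\vek y\mapsto\mat A\vek y$, with image $D$. Using the pairing $(\vek x,\vek y)\mapsto\vek x\cdot\vek y$ together with the associativity of matrix multiplication one checks that $\kernel f={}^\perp D$ and $\kernel g=C^\perp$, whence $\abs C=\abs{R^m}/\abs{{}^\perp D}$ and $\abs D=\abs{R^n}/\abs{C^\perp}$. So for this $\mat A$ the equality $\abs C=\abs D$ amounts to $\abs C\cdot\abs{C^\perp}=\abs{R^n}$. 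As $\mat A$ ranges over all matrices, its left row space $C$ ranges over all submodules of all free left modules ${}_RR^n$ (a submodule of the finite module ${}_RR^n$ is generated by finitely many vectors, which we take as the rows of $\mat A$). Identifying $\Hom_R({}_RR^n,{}_RR)\cong R^n_R$ via $\vek y\mapsto(\vek x\mapsto\vek x\cdot\vek y)$, and writing a finite left module as $M=R^n/C$, we get $\Hom_R(M,{}_RR)\cong C^\perp$ with $\abs M=\abs{R^n}/\abs C$. Since every finite left module is a quotient of some ${}_RR^n$, the condition of the Proposition is therefore \emph{equivalent} to
\[
\abs{\Hom_R(M,{}_RR)}=\abs M\qquad\text{for every finite left }R\text{-module }M
\]
(and, by the symmetric computation, to the same statement for right modules).

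For the forward implication, let $R$ be Frobenius with generating character $\chi$. Then $a\mapsto\bigl(x\mapsto\chi(xa)\bigr)$ is a homomorphism of left $R$-modules ${}_RR\to\widehat{R_R}$, where $\widehat{R_R}=\Hom_\Z(R,\C^\times)$ carries the left action $(r\psi)(x)=\psi(xr)$; its kernel consists of those $a$ with $Ra\subseteq\kernel\chi$, hence is $0$, and it is bijective since $\abs{\widehat R}=\abs R$. For a finite left module $M$ the Hom--tensor adjunction now gives $\Hom_R(M,{}_RR)\cong\Hom_R\bigl(M,\Hom_\Z(R_R,\C^\times)\bigr)\cong\Hom_\Z(R\otimes_R M,\C^\times)=\widehat M$, so $\abs{\Hom_R(M,{}_RR)}=\abs M$ as required.

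Conversely, assume $\abs{\Hom_R(M,{}_RR)}=\abs M$ for every finite left $M$ and apply this to the simple modules. Write $R/\rad R\cong\prod_iM_{n_i}(D_i)$, let $S_i$ be the corresponding simple left modules, and let $k_i$ be the multiplicity of $S_i$ in $\soc({}_RR)$. Then $\Hom_R(S_i,{}_RR)=\Hom_R\bigl(S_i,\soc({}_RR)\bigr)\cong\End_R(S_i)^{k_i}$, a module of cardinality $\abs{D_i}^{k_i}$, while $\abs{S_i}=\abs{D_i}^{n_i}$; hence $k_i=n_i$ for all $i$, i.e. $\soc({}_RR)\cong{}_R(R/\rad R)$. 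A parallel computation shows $\soc(\widehat R)$, being the annihilator of $\rad R$ among the characters and thus isomorphic as a left module to $\widehat{(R/\rad R)_R}$, is likewise $\cong{}_R(R/\rad R)$. So $\soc({}_RR)$ embeds into the injective left module $\widehat R$; any such embedding extends to a homomorphism ${}_RR\to\widehat R$, which is injective because $\soc({}_RR)$ is essential in ${}_RR$, and bijective for cardinality reasons. Hence ${}_RR\cong\widehat R$ and the image of $1$ under such an isomorphism is a generating character, so $R$ is Frobenius. The main obstacle here is this concluding passage: getting from the $\Hom$-identity to $R$ being Frobenius needs the structure of $R/\rad R$, the self-duality of semisimple modules, and injectivity of $\widehat R=\Hom_\Z(R,\C^\times)$ — or, as a shortcut, one may invoke the known characterization of finite Frobenius rings by the condition $\soc({}_RR)\cong{}_R(R/\rad R)$. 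Everything else — in particular the noncommutative bookkeeping of left/right structures in the first paragraph — is routine.
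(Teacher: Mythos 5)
Your proof is correct. The opening reduction is exactly the one the paper uses: the map $\vek y\mapsto\mat A\vek y$ has kernel $C^\perp$ and image $D$, so $\abs C=\abs D$ for all $\mat A$ is equivalent to $\abs C\,\abs{C^\perp}=\abs{R^n}$ for all submodules $C\leq{}_RR^n$. At that point the paper simply cites the known characterization of finite Frobenius rings by this double-annihilator cardinality condition (the reference \cite{t:fnote}) and stops. You instead prove that characterization from scratch: you recast it as $\abs{\Hom_R(M,{}_RR)}=\abs M$ for all finite left modules $M$, obtain the forward direction from ${}_RR\cong\widehat{R_R}$ plus tensor--hom adjunction, and obtain the converse by testing on simple modules to force $\soc({}_RR)\cong{}_R(R/\rad R)$ and then reconstructing a generating character via injectivity of $\widehat R$ and essentiality of the socle. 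All of these steps check out (in particular $\ker g=C^\perp$, the identification $\Hom_R(R^n/C,{}_RR)\cong C^\perp$, and the Schur-lemma count $\abs{\Hom_R(S_i,\soc({}_RR))}=\abs{D_i}^{k_i}$ versus $\abs{S_i}=\abs{D_i}^{n_i}$). So your argument is a self-contained expansion of a proof the paper delegates to a citation; what it buys is independence from \cite{t:fnote}, at the cost of importing the standard structure theory (Wedderburn decomposition, self-duality of semisimple modules, injectivity of the character module) that the cited paper packages. Either route is acceptable; yours is not wrong, just longer than necessary for the purposes of this Proposition.
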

\begin{proof}
  Suppose first that $R$ is Frobenius. The map $R^n\to R^m$,
  $\vek{y}\mapsto\mat{A}\vek{y}$ has kernel $C^\perp$ and image
  $D$. By the homomorphism theorem,
  $\abs{C^\perp}\abs{D}=\abs{R^n}$. Thus $\abs{C}=\abs{D}$ iff
  $\abs{C}\abs{C^\perp}=\abs{R^n}$. Since finite Frobenius rings are
  characterized by the property that $\abs{C}\abs{C^\perp}=\abs{R^n}$
  for every $n$ and every submodule $\abs{C}\leq{}_RR^n$ (cf.\
  for example \cite{t:fnote}), the result follows.  
\end{proof}

From now on we suppose that $R$ is a finite Frobenius ring with
homogeneous weight $\whom$. First we recall an alternative expression
for $\whom(x)$ derived in \cite{st:homippi}. Suppose $R/\rad
R=\prod_{i=1}^tR_i$ where $R_i\cong\Mat(m_i,\F_{q_i})$ is a simple
ring and $\abs{R_1}\leq\abs{R_2}\leq\dots\leq\abs{R_t}$. Then
$\soc(R)=\bigoplus_{i=1}^t S_i$ where $S_i\cong R_i$ (as an
$R_i$-$R_i$-bimodule)
is the two-sided ideal of $R$ defined by $R_iS_i=S_iR_i=S_i$ and
$R_iS_j=S_jR_i=\{0\}$ for $1\leq i,j\leq t$.

From \cite{st:homippi} we have the following: 
If $x\notin\soc(R)$ then $\whom(x)=1$.  If $x\in\soc(R)$,
$x=\sum_{i=1}^tx_i$ with $x_i\in S_i$, then
\begin{equation}
  \label{eq:whomippi}
  \whom(x)=1-\prod_{i=1}^t\frac{(-1)^{\rank
      x_i}}{(q_i^{m_i}-1)(q_i^{m_i-1}-1)
    \dotsm(q_i^{m_i-\rank x_i+1}-1)},
\end{equation}
where $\rank\colon S_i\to\N_0$ denotes the ``matrix rank'' induced by
the isomorphism $S_i\cong\Mat(m_i,\F_{q_i})$.

Next we determine the set of all $x\in R$ satisfying $\whom(x)=0$. Let
$S_i=Rs_i$, $1\leq i\leq\tau$, be the different left ideals of $R$ of
order $2$ and $S=S_1+\dots+S_\tau$. The set $S$ is a two-sided ideal
of $R$ of order $2^\tau$, whose elements are the subset sums of
$\{s_1,\dots,s_\tau\}$.
Define $S_0\subseteq S$ as the set of all sums of an even number of
elements from $\{s_1,\dots,s_\tau\}$ (``even-weight subcode of $S$'').
Note that $S_0$ is a subgroup of $(R,+)$, trivial for
$\tau\leq 1$ and nontrivial (of order $2^{\tau-1}$) for $\tau\geq 2$.
\begin{proposition}
  \label{prop:S_0}
  We have $\whom(x)\geq 0$ for all $x\in R$ and $S_0=\{x\in
  R;\whom(x)=0\}$. Moreover, $\whom(x+y)=\whom(x)$ for all $x\in R$
  and $y\in S_0$.
\end{proposition}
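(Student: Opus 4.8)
The plan is to deduce everything from the explicit formula~\eqref{eq:whomippi}. For $x\notin\soc(R)$ we have $\whom(x)=1$, so the interesting case is $x\in\soc(R)$; write $x=\sum_{i=1}^{t}x_i$ with $x_i\in S_i$. Put $N=\sum_{i=1}^{t}\rank x_i$ and
\[
  D=\prod_{i=1}^{t}\prod_{j=1}^{\rank x_i}\bigl(q_i^{m_i-j+1}-1\bigr),
\]
so that \eqref{eq:whomippi} becomes $\whom(x)=1-(-1)^{N}/D$. Each factor $q_i^{m_i-j+1}-1$ is a positive integer (the exponent satisfies $1\le m_i-j+1\le m_i$ when $1\le j\le\rank x_i\le m_i$), hence $D\in\N$ and $\lvert(-1)^{N}/D\rvert\le1$; therefore $\whom(x)\in[0,2]$, which gives the first claim, and moreover $\whom(x)=0$ precisely when $D=1$ and $N$ is even.

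The heart of the argument is to identify when $D=1$. Since every factor is $\ge q_i-1\ge1$, we have $D=1$ iff for each $i$ with $\rank x_i\ge1$ all factors $q_i^{m_i-j+1}-1$ equal $1$; the largest of these, $q_i^{m_i}-1$, then forces $q_i=2$ and $m_i=1$, whence also $\rank x_i=1$. The indices $i$ with $q_i=2$, $m_i=1$ are exactly those for which the component $S_i\cong\Mat(1,\F_2)=\F_2$ is a left ideal of $R$ of order $2$; conversely any left ideal of $R$ of order $2$ is a simple left module, hence lies inside a single homogeneous component $S_j$ of $\soc(R)$, and as a left module $S_j$ is a direct sum of $m_j$ copies of a simple module of size $q_j^{m_j}$, so $q_j^{m_j}=2$ and the left ideal must equal $S_j$. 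Relabelling these components as $S_1=Rs_1,\dots,S_\tau=Rs_\tau$ as in the text, the condition $D=1$ says exactly that $x=\sum_{i\in I}s_i$ for some $I\subseteq\{1,\dots,\tau\}$, and then $N=\lvert I\rvert$; imposing that $N$ is even yields precisely $x\in S_0$. (For the converse one checks from \eqref{eq:whomippi} that each $s_i$ contributes the factor $(-1)^{1}/(2^{1}-1)=-1$, so $\whom\bigl(\sum_{i\in I}s_i\bigr)=1-(-1)^{\lvert I\rvert}=0$ when $\lvert I\rvert$ is even.) This establishes $S_0=\{x\in R;\whom(x)=0\}$.

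Finally, for the translation invariance let $y=\sum_{i\in I}s_i\in S_0$, so $\lvert I\rvert$ is even. If $x\notin\soc(R)$ then $x+y\notin\soc(R)$ as well (otherwise $x=(x+y)-y\in\soc(R)$), so $\whom(x+y)=1=\whom(x)$. If $x\in\soc(R)$, write $x=\sum_i x_i$; passing to $x+y$ leaves all components unchanged except those $x_i$ with $i\in I$, each of which is swapped between $0$ and $s_i$ and hence has its rank swapped between $0$ and $1$. In \eqref{eq:whomippi} such a component $S_i$ (with $q_i=2$, $m_i=1$) contributes the factor $(-1)^{\rank x_i}$, so the product is multiplied by $(-1)^{\lvert I\rvert}=1$ and $\whom(x+y)=\whom(x)$. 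The only genuine obstacle is the bookkeeping in the middle step — matching the order-$2$ left ideals with the $\F_2$-components of $\soc(R)$ and controlling the denominator $D$ — together with the trivial but necessary remark that adding an element of $\soc(R)$ cannot move a point into or out of $\soc(R)$.
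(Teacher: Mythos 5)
Your proof is correct and follows exactly the route the paper intends: the paper's proof is just the remark that the proposition ``follows from a close inspection of the formula \eqref{eq:whomippi}'', and your argument carries out that inspection in full (sign/denominator analysis, identification of the order-$2$ left ideals with the components having $q_i=2$, $m_i=1$, and the parity argument for translation invariance).
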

\begin{proof}
  This follows from a close inspection of the formula
  \eqref{eq:whomippi}.
\end{proof}
\begin{fact}[cf.~\cite{wt:egal}]
  \label{fact:egal}
\begin{equation}
  \label{eq:egal}
  \sum_{x\in I}\whom(x+c)=\abs{I}
\end{equation}
for all nonzero left (or right) ideals $I$ of $R$ and all $c\in
R$. 
\end{fact}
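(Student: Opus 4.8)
The plan is to reduce \eqref{eq:egal} to the non-degeneracy of the Frobenius character $\chi$ via the character-theoretic formula \eqref{eq:whom}. I would first treat the case where $I$ is a left ideal; the right-ideal case is entirely symmetric, using the ``right-hand'' version of \eqref{eq:whom}, namely $\whom(x)=1-\abs{R^\times}^{-1}\sum_{u\in R^\times}\chi(xu)$, which holds for the same reasons.

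Substituting \eqref{eq:whom} into the left-hand side of \eqref{eq:egal} and separating the constant term gives
\begin{equation*}
  \sum_{x\in I}\whom(x+c)=\abs{I}-\frac{1}{\abs{R^\times}}\sum_{u\in R^\times}\chi(uc)\sum_{x\in I}\chi(ux).
\end{equation*}
The key observation is that for each fixed unit $u$ the map $x\mapsto ux$ is a bijection of $I$ onto itself: since $I$ is a left ideal, $uI\subseteq I$, and applying $u^{-1}$ yields $I\subseteq uI$, hence $uI=I$. Thus the inner sum equals $\sum_{y\in I}\chi(y)$ for every $u$. Now $\chi$ restricted to the finite abelian group $(I,+)$ is a group character, so $\sum_{y\in I}\chi(y)$ equals $\abs{I}$ if $\chi|_I$ is trivial and $0$ otherwise. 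Triviality would force $I\subseteq\kernel\chi$, contradicting the defining property of a Frobenius character, since $I$ is a nonzero left ideal. Hence $\sum_{y\in I}\chi(y)=0$, the double sum vanishes, and only $\abs{I}$ remains.

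I do not expect a serious obstacle; the proof is short. The two points that need care are (i) recording that $uI=I$ (not merely $uI\subseteq I$), so that averaging over the units does not spoil the argument, and (ii) invoking the correct side of the Frobenius condition — for a right ideal $I$ one uses $\chi(xu)$, the identity $Iu=I$, and the fact that $\kernel\chi$ contains no nonzero right ideal either. One could alternatively try to argue without characters by passing to the larger one-sided ideal $I+Rc$ and splitting it into cosets of $I$, but this does not obviously give that the individual coset sums are equal, so the character computation above seems the cleanest route.
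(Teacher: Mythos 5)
Your argument is correct, and it supplies a proof where the paper gives none: Fact~\ref{fact:egal} is simply quoted from \cite{wt:egal}, so there is no internal proof to compare against, and your character computation is the standard one. For a left ideal the details are exactly right: $uI=I$ for every unit $u$, and $\chi|_I$ is a nontrivial character of $(I,+)$ precisely because $I$ is a nonzero left ideal and $\kernel\chi$ contains no such ideal, so $\sum_{x\in I}\chi(ux)=0$ for each $u$ and only $\abs{I}$ survives. The one place where you lean on an auxiliary fact is the right-ideal case: the ``right-hand'' formula $\whom(x)=1-\abs{R^\times}^{-1}\sum_{u\in R^\times}\chi(xu)$ is true, but it encodes the left--right symmetry of the homogeneous weight (equivalently, facts such as $\chi(xy)=\chi(yx)$ for generating characters), so ``holds for the same reasons'' is a little quick. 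You can avoid it entirely: keep the left-sided formula \eqref{eq:whom}, and for a right ideal $I$ note that for each unit $u$ the map $x\mapsto\chi(ux)$ is a character of $(I,+)$ that is nontrivial because $uI$ is again a nonzero \emph{right} ideal (left multiplication by a unit preserves right ideals), hence $uI\not\subseteq\kernel\chi$ since $\kernel\chi$ contains no nonzero right ideal either; then $\sum_{x\in I}\chi(ux)=0$ for every $u$ and the same computation closes the right-ideal case without invoking the right-sided expression for $\whom$, and without needing $x\mapsto ux$ to be a bijection of $I$.
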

The following correlation property of $\whom$ turns out to be crucial.
\begin{proposition}
  \label{prop:whom-corr}
  For a nonzero left ideal $I$ of $R$ and $r,s\in R$ we have
  \begin{equation}
    \label{eq:whom-corr}
    \sum_{x\in I}\whom(x)\whom(xr+s)
    =
    \begin{cases}
\abs{I}+\abs{I}\cdot\frac{\abs{R^\times\cap(1+I^\perp)}}
    {\abs{R^\times}}\cdot\bigl(1-\whom(s)\bigr)&\text{if $\abs{Ir}=\abs{I}$},\\
    \abs{I}&\text{if $\abs{Ir}<\abs{I}$}.
    \end{cases}
 \end{equation}
  In particular $\sum_{x\in
    R}\whom(x)^2=\abs{R}\cdot\left(1+\frac{1}{\abs{R^\times}}\right)$.
\end{proposition}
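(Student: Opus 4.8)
The plan is to expand the product using the character formula \eqref{eq:whom}. Put $q=\abs{R^\times}$, so that $1-\whom(x)=\frac1q\sum_{u\in R^\times}\chi(ux)$ for all $x$. Writing $\whom(x)\whom(xr+s)=\bigl(1-(1-\whom(x))\bigr)\bigl(1-(1-\whom(xr+s))\bigr)$ and summing over $x\in I$, the left side of \eqref{eq:whom-corr} becomes
\[
  \abs I-\sum_{x\in I}\bigl(1-\whom(x)\bigr)-\sum_{x\in I}\bigl(1-\whom(xr+s)\bigr)+\sum_{x\in I}\bigl(1-\whom(x)\bigr)\bigl(1-\whom(xr+s)\bigr).
\]
The first correction vanishes: for a fixed unit $u$ the map $x\mapsto ux$ permutes $I$, hence $\sum_{x\in I}\chi(ux)=\sum_{x\in I}\chi(x)=0$, the last equality because the nonzero left ideal $I$ is not contained in $\ker\chi$. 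For the second correction, $\sum_{x\in I}\chi\bigl(v(xr+s)\bigr)=\chi(vs)\sum_{x\in I}\chi(vxr)$; since $Ir$ is a left ideal and $v$ a unit, $v(Ir)$ is a nonzero left ideal whenever $Ir\neq0$, so $\sum_{x\in I}\chi(vxr)=0$ and the second correction vanishes in that case. (If $Ir=0$ then $\whom(xr+s)=\whom(s)$ on $I$ and the whole sum equals $\abs I\,\whom(s)$ by Fact~\ref{fact:egal}; this degenerate case lies inside the second line of \eqref{eq:whom-corr}, where $\abs{Ir}=1<\abs I$.)

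The third correction is $\frac1{q^2}\sum_{u,v\in R^\times}\chi(vs)\sum_{x\in I}\chi(ux+vxr)$. Substituting $x\mapsto u^{-1}x$ in the inner sum turns it into $\sum_{x\in I}\chi(x+wxr)$ with $w=vu^{-1}$; for fixed $v$, as $u$ runs over $R^\times$ so does $w$, so $\sum_u\sum_{x\in I}\chi(ux+vxr)=\Sigma$ is independent of $v$, where $\Sigma:=\sum_{w\in R^\times}\sum_{x\in I}\chi(x+wxr)$. Thus the third correction equals $\frac{\Sigma}{q^2}\sum_v\chi(vs)=\frac{\Sigma}{q}\bigl(1-\whom(s)\bigr)$. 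For each $w$ the sum $\sum_{x\in I}\chi(x+wxr)$ is $\abs I$ if $x+wxr\in\ker\chi$ for all $x\in I$ and $0$ otherwise (a character of $(I,+)$ is trivial or sums to zero), so $\Sigma=\abs I\cdot\abs W$ with $W:=\{w\in R^\times:\chi(x+wxr)=1\text{ for all }x\in I\}$, and the third correction is $\frac{\abs I\,\abs W}{q}\bigl(1-\whom(s)\bigr)$.

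It remains to compute $\abs W$. If $0<\abs{Ir}<\abs I$, then $I\cap{}^\perp r$ is a nonzero left ideal, and any $w\in W$ would force $\chi(x)=\chi(x+w\cdot0)=1$ for $x\in I\cap{}^\perp r$, i.e. $I\cap{}^\perp r\subseteq\ker\chi$ — impossible; so $W=\emptyset$ and the left side of \eqref{eq:whom-corr} is $\abs I$. Now assume $\abs{Ir}=\abs I$. Since $R$ is Frobenius, applying the bijection $R\to\widehat R$, $c\mapsto(x\mapsto\chi(xc))$, to the character $x\mapsto\chi(wxr)$ gives a unique $\beta(w)\in R$ with $\chi(wxr)=\chi(x\beta(w))$ for all $x$, whence $\chi(x+wxr)=\chi\bigl(x(1+\beta(w))\bigr)$; comparing the left and right module isomorphisms $\widehat R\cong{}_RR$ and $\widehat R\cong R_R$ identifies $\beta(w)=r\,\nu(w)$, where $\nu$ is the Nakayama automorphism characterised by $\chi(x\nu(a))=\chi(ax)$. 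Then $w\in W$ iff $I\bigl(1+\beta(w)\bigr)\subseteq\ker\chi$, i.e. $I\bigl(1+\beta(w)\bigr)=0$, and since $\nu(w)$ is a unit this is equivalent to $\nu(w)^{-1}\in-r+I^\perp$; as $w\mapsto\nu(w)^{-1}$ permutes $R^\times$, this yields $\abs W=\abs{(-r+I^\perp)\cap R^\times}=\abs{(r+I^\perp)\cap R^\times}$. The remaining point is the identity $\abs{(r+I^\perp)\cap R^\times}=\abs{(1+I^\perp)\cap R^\times}$, which is where $\abs{Ir}=\abs I$ is really used; it follows from a short computation after reducing modulo $\rad R$, where $R/\rad R\cong\prod_i\Mat(m_i,\F_{q_i})$ and one counts invertible matrices in a coset of a column-space right ideal.

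Putting the three contributions together gives both lines of \eqref{eq:whom-corr}. The ``in particular'' statement is the case $I=R$, $r=1$, $s=0$: here $\abs{Ir}=\abs I$, $R^\perp=\{0\}$, so $\abs{R^\times\cap(1+R^\perp)}=1$, and $\whom(0)=0$, giving $\sum_{x\in R}\whom(x)^2=\abs R\bigl(1+\frac1{\abs{R^\times}}\bigr)$. I expect the main obstacle to be the evaluation of $\abs W$: the character computations are essentially forced, but converting ``$x+wxr\in\ker\chi$ on $I$'' into the clean count $\abs{R^\times\cap(1+I^\perp)}$ requires the Frobenius/Nakayama duality and, for the non-projective case where $r$ need not be invertible, the structure of $R$ modulo its radical.
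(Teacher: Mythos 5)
Most of your expansion is correct and is genuinely different from the paper's route: the two linear correction terms vanish for $Ir\neq 0$, the quadratic term correctly collapses to $\frac{\abs{I}\,\abs{W}}{\abs{R^\times}}\bigl(1-\whom(s)\bigr)$, $W=\emptyset$ when $0<\abs{Ir}<\abs{I}$, and the passage from $W$ to $\abs{R^\times\cap(-r+I^\perp)}$ via the Nakayama automorphism is sound (though that automorphism is outside machinery the paper never needs). The genuine gap is the step you yourself single out: the identity $\abs{R^\times\cap(r+I^\perp)}=\abs{R^\times\cap(1+I^\perp)}$ under the hypothesis $\abs{Ir}=\abs{I}$ is the entire content of the hard case, and ``a short computation after reducing modulo $\rad R$'' is not an argument. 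Concretely, the mod-$\rad R$ count of invertible matrices in a coset of a column-space right ideal comes out equal to the count for the coset of $1$ only if in every simple component the relevant column spaces are jointly spanning, \ie\ only if $rR+I^\perp+\rad R=R$; your sketch never explains how the hypothesis $I\cap{}^\perp r=0$, which lives on the other side of the annihilator, produces this. The missing ingredient is a Frobenius-duality step, for instance: ${}^\perp\bigl(rR+I^\perp\bigr)={}^\perp r\cap{}^\perp(I^\perp)={}^\perp r\cap I=0$ by the double annihilator property, whence $\abs{rR+I^\perp}=\abs{R}$ by the duality $\abs{J}\cdot\abs{{}^\perp J}=\abs{R}$, so $rR+I^\perp=R$; only then does the componentwise matrix count (which itself needs to be written out) go through. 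Alternatively one can avoid matrices entirely by showing that the coset $r+I^\perp$ contains a unit $u_0$ (self-injectivity of $R$ plus Krull--Schmidt: the injection $I\to{}_RR$, $x\mapsto xr$, extends to an automorphism of ${}_RR$, which is right multiplication by a unit), after which $v\mapsto vu_0^{-1}$ bijects $R^\times\cap(r+I^\perp)$ onto $R^\times\cap(1+I^\perp)$. Note that the paper never meets this difficulty: it first proves $\rho(s)=\rho(0)+\bigl(\abs{I}-\rho(0)\bigr)\whom(s)$ from \eqref{eq:egal}, and in the case $\abs{Ir}=\abs{I}$ it uses that $x\mapsto xr$ is a module isomorphism $I\to Ir$, so $\whom(xr)=\whom(x)$ and $\rho(0)=\sum_{x\in I}\whom(x)^2$, whose character expansion yields $\abs{R^\times\cap(1+I^\perp)}$ directly, with no coset-of-units counting and no Nakayama automorphism.

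A smaller point: your parenthetical treatment of $Ir=0$ is internally inconsistent. You correctly compute that the sum then equals $\abs{I}\whom(s)$, but this does not ``lie inside the second line'' of \eqref{eq:whom-corr}, which asserts the value $\abs{I}$; already for $s=0$ the two differ. What your computation actually shows is that the second case of the proposition tacitly requires $Ir\neq 0$ (the paper's own proof uses $\sum_{a\in Ir}\whom(a)=\abs{Ir}$, which is only valid for $Ir\neq 0$); in the later applications, \eg\ in Prop.~\ref{prop:whom-corr-vek}, one always has $Ir\neq 0$, so the exclusion is harmless, but you should state it rather than claim agreement. The ``in particular'' statement and your verification of it are fine.
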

\begin{proof}
  Denote the left-hand side of \eqref{eq:whom-corr} by $\rho(s)$. Using
  \eqref{eq:egal} it
  is easily verified that $\rho(us)=\rho(s)$ for $s\in R$, $u\in
  R^\times$ and $\sum_{s\in J}\rho(s)=\abs{I}\abs{J}$ for all nonzero
  left ideals $J\leq{}_RR$. Hence
  $\rho(s)=\rho(0)+\bigl(\abs{I}-\rho(0)\bigr)\whom(s)$ for $s\in R$.

  If $\abs{Ir}<\abs{I}$ then $K:=I\cap{}^\perp r\neq 0$. Hence 
  choosing $x_a\in I$ with $x_ar=a$ (for $a\in Ir$) we get
  \begin{align*}
    \rho(0)&=\sum_{x\in I}\whom(x)\whom(xr)=\sum_{a\in Ir}\left(\sum_{x\in
      K+x_a}\whom(x)\right)\whom(a)\\
  &=\abs{K}\sum_{a\in Ir}\whom(a)=\abs{K}\abs{Ir}=\abs{I}.    
  \end{align*}

  If $\abs{Ir}=\abs{I}$, \ie\ $I\to Ir$, $x\mapsto xr$ is an
  isomorphism of left $R$-modules, then $\whom(xr)=\whom(x)$ and hence
  \begin{align*}
    \rho(0)=\sum_{x\in I}\whom(x)^2&=\sum_{x\in
      I}\left(1-\frac{1}{\abs{R^\times
        }}\sum_{u\in R^\times}\chi(xu)\right)^2\\
    &=\abs{I}+\frac{1}{\abs{R^\times}^2}\cdot\sum_{u,v\in R^\times}
    \sum_{x\in I}\chi\bigl(x(u+v)\bigr)\\
    &=\abs{I}+\frac{\abs{I}}{\abs{R^\times}^2}\cdot
    \abs{\bigl\{(u,v)\in R^\times\times R^\times;I(u+v)=0\bigr\}}\\
    &=\abs{I}+\frac{\abs{I}}{\abs{R^\times}}\cdot
    \abs{\bigl\{u\in R^\times;I(u-1)=0\bigr\}}\\
    &=\abs{I}+\frac{\abs{I}}{\abs{R^\times}}
    \cdot\abs{R^\times\cap(1+I^\perp)}.
  \end{align*}
\end{proof}
For vectors $\vek{x},\vek{y}\in R^k$ we write $\vek{x}\sim\vek{y}$ if
$\vek{x}R^\times=\vek{y}R^\times$. By \cite[Prop.~5.1]{wood99a}
this is equivalent to $\vek{x}R=\vek{y}R$.
\begin{proposition}
  \label{prop:whom-corr-vek}
  For nonzero words $\vek{g},\vek{h}\in R^k$ and $s\in R$ we have
  \begin{equation}
    \label{eq:whom-corr-vek}
    \sum_{\vek{x}\in R^k}\whom(\vek{x}\cdot\vek{g})
    \whom(\vek{x}\cdot\vek{h}+s)=
    \begin{cases}
      \abs{R}^k+\frac{\abs{R}^k}{\abs{\vek{g}R^\times}}
      \cdot\bigl(1-\whom(s)\bigr) 
      &\text{if $\vek{g}\sim\vek{h}$},\\
      \abs{R}^k
      &\text{if $\vek{g}\nsim\vek{h}$}.
    \end{cases}
  \end{equation}
\end{proposition}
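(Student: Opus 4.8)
The plan is to reduce the vector statement in Proposition~\ref{prop:whom-corr-vek} to the scalar statement in Proposition~\ref{prop:whom-corr} by choosing a suitable coordinate change. First I would fix $\vek{g}\neq\vek{0}$ and let $I=\vek{g}\cdot R=\{\vek{g}\cdot\vek{x};\vek{x}\in R^k\}$, which is a nonzero left ideal of $R$ (it is the left row space of the $1\times k$ matrix with row $\vek{g}$, viewed appropriately). The key observation is that the map $R^k\to I$, $\vek{x}\mapsto\vek{x}\cdot\vek{g}$, is a surjective homomorphism of left $R$-modules, so every fibre has the same size $\abs{R}^k/\abs{I}$. Grouping the sum on the left of \eqref{eq:whom-corr-vek} according to the value $a=\vek{x}\cdot\vek{g}\in I$, I would like to write
\begin{equation*}
  \sum_{\vek{x}\in R^k}\whom(\vek{x}\cdot\vek{g})\whom(\vek{x}\cdot\vek{h}+s)
  =\sum_{a\in I}\whom(a)\sum_{\substack{\vek{x}\in R^k\\ \vek{x}\cdot\vek{g}=a}}\whom(\vek{x}\cdot\vek{h}+s).
\end{equation*}
The inner sum ranges over a coset of the submodule $\{\vek{x}\in R^k;\vek{x}\cdot\vek{g}=0\}={}^\perp\vek{g}$ (using the notation of the excerpt), so I would need to understand how $\vek{h}$ acts on that coset.

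The crucial case split is exactly whether $\vek{g}\sim\vek{h}$. If $\vek{g}\nsim\vek{h}$, I would argue that $\vek{x}\mapsto(\vek{x}\cdot\vek{g},\vek{x}\cdot\vek{h})$ has image a submodule of $R^2$ strictly larger than $\{(a,ra);a\in I,\ldots\}$ would allow; more precisely, the inner sum $\sum_{\vek{x}\cdot\vek{g}=a}\whom(\vek{x}\cdot\vek{h}+s)$ runs $\vek{x}\cdot\vek{h}+s$ over a coset of a nonzero left ideal $J$ of $R$ (here I use that $\vek{h}\ne\vek{0}$ forces $\{\vek{x}\cdot\vek{h};\vek{x}\cdot\vek{g}=0\}$ to be a nonzero left ideal whenever $\vek{h}\notin\vek{g}R$, which is the content of $\vek{g}\nsim\vek{h}$ via \cite[Prop.~5.1]{wood99a}), and then Fact~\ref{fact:egal} gives that the inner sum equals $\abs{J}\cdot(\abs{R}^k/(\abs{I}\abs{J}))=\abs{R}^k/\abs{I}$, independent of $a$; summing $\whom(a)$ over $I$ and using $\sum_{a\in I}\whom(a)=\abs{I}$ yields $\abs{R}^k$. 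If $\vek{g}\sim\vek{h}$, then $\vek{h}=\vek{g}r$ for some $r\in R$ with $\vek{g}r\sim\vek{g}$ (equivalently $\vek{g}rR=\vek{g}R$), so on each fibre $\vek{x}\cdot\vek{h}=\vek{x}\cdot\vek{g}r=ar$ is constant; the inner sum is then $(\abs{R}^k/\abs{I})\cdot\whom(ar+s)$, and the total becomes $(\abs{R}^k/\abs{I})\sum_{a\in I}\whom(a)\whom(ar+s)$.

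At this point I would feed the last sum into Proposition~\ref{prop:whom-corr}. Since $\vek{g}r\sim\vek{g}$ is equivalent to $\vek{g}rR=\vek{g}R$, i.e.\ $Ir=I$, we are in the first case of \eqref{eq:whom-corr}, so
\begin{equation*}
  \sum_{a\in I}\whom(a)\whom(ar+s)
  =\abs{I}+\abs{I}\cdot\frac{\abs{R^\times\cap(1+I^\perp)}}{\abs{R^\times}}\cdot\bigl(1-\whom(s)\bigr),
\end{equation*}
and multiplying by $\abs{R}^k/\abs{I}$ gives $\abs{R}^k+(\abs{R}^k/\abs{R^\times})\cdot\abs{R^\times\cap(1+I^\perp)}\cdot(1-\whom(s))$. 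It remains to identify $\abs{R^\times}/\abs{R^\times\cap(1+I^\perp)}$ with $\abs{\vek{g}R^\times}$: the orbit $\vek{g}R^\times$ has size $\abs{R^\times}/\abs{\{u\in R^\times;\vek{g}u=\vek{g}\}}$, and $\vek{g}u=\vek{g}$ means $\vek{g}(u-1)=\vek{0}$, i.e.\ $u-1\in{}^\perp\vek{g}$ acting from the right on $\vek{g}$... more carefully, $\vek{g}(u-1)=\vek0$ means each $g_j(u-1)=0$, so $u-1$ annihilates all $g_j$ on the right, i.e.\ $u-1\in\bigcap_j g_j^{\perp}$, and one checks this intersection equals $I^\perp$ where $I=\sum_j Rg_j$; hence the stabilizer is $R^\times\cap(1+I^\perp)$ and the orbit-size identity follows. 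I expect the main obstacle to be precisely this last bookkeeping — correctly matching the module $I$ built from $\vek{g}$ with the ideal-theoretic quantities $I^\perp$, $Ir$, and the stabilizer of $\vek{g}$ under $R^\times$ — together with justifying in the $\vek{g}\nsim\vek{h}$ case that the relevant image is genuinely a \emph{nonzero} left ideal so that Fact~\ref{fact:egal} applies; the Frobenius property (via Proposition~\ref{prop:same-shape} and the characterization $\abs{C}\abs{C^\perp}=\abs{R^n}$) is what guarantees the fibre sizes and annihilator orders behave as claimed.
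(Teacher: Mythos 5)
Your overall strategy is the paper's: fibre the sum over the values $a=\vek{x}\cdot\vek{g}\in I=Rg_1+\dots+Rg_k$, view each fibre as a coset of ${}^\perp\vek{g}$, and feed the resulting scalar sum into Proposition~\ref{prop:whom-corr}, with the same stabilizer--orbit identification $\abs{\vek{g}R^\times}=\abs{R^\times}/\abs{R^\times\cap(1+I^\perp)}$; carrying $s$ along directly instead of first proving $\rho(s)=\rho(0)+(\abs{R}^k-\rho(0))\whom(s)$ is harmless. The genuine gap is that your case analysis is incomplete: by Wood's Prop.~5.1, $\vek{g}\nsim\vek{h}$ means $\vek{g}R\neq\vek{h}R$, which is \emph{not} the same as $\vek{h}\notin\vek{g}R$. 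The subcase $\vek{h}=\vek{g}r$ with $\vek{h}R\subsetneq\vek{g}R$ falls through your $\nsim$ argument: there $\vek{y}\cdot\vek{h}=(\vek{y}\cdot\vek{g})r=0$ for every $\vek{y}\in{}^\perp\vek{g}$, so $J=\{\vek{x}\cdot\vek{h};\vek{x}\cdot\vek{g}=0\}$ is the \emph{zero} ideal and Fact~\ref{fact:egal} does not apply (simplest instance: $R=\Z_4$, $k=1$, $g=1$, $h=2$, where $g\nsim h$ but $J=\{0\}$).

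In that subcase the inner sum is $\abs{{}^\perp\vek{g}}\,\whom(ar+s)$, so the total equals $(\abs{R}^k/\abs{I})\sum_{a\in I}\whom(a)\whom(ar+s)$, and one must invoke the \emph{second} branch of Proposition~\ref{prop:whom-corr}: since $\vek{g}rR\subsetneq\vek{g}R$, the double-annihilator property of Frobenius rings gives ${}^\perp\vek{g}\subsetneq{}^\perp(\vek{g}r)$, hence some $a=\vek{x}\cdot\vek{g}\neq 0$ satisfies $ar=0$, i.e.\ $\abs{Ir}<\abs{I}$, and that branch yields $\abs{I}$, hence $\abs{R}^k$ as required. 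This is exactly how the paper handles it (its Case~(i), second subcase). A related smaller point: in your remaining $\nsim$ subcase $\vek{h}\notin\vek{g}R$ you also implicitly need $({}^\perp\vek{g})^\perp=\vek{g}R$ (again the Frobenius double annihilator) to conclude that $J\neq 0$; with these repairs your argument coincides with the paper's proof.
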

\begin{proof}
  Reasoning as in the proof of Prop.~\ref{prop:whom-corr} the
  left-hand side $\rho(s)$ of \eqref{eq:whom-corr-vek} satisfies
  $\rho(s)=\rho(0)+\bigl(\abs{R}^k-\rho(0)\bigr)\whom(s)$ for $s\in R$.

  For $\vek{g}\in R^k$, $a\in R$ the equation
  $\vek{x}\cdot\vek{g}=x_1g_1+\dots+x_kg_k=a$ is solvable if and only
  if $a\in Rg_1+\dots+Rg_k$. If this is true and
  $\vek{x}_a$ denotes a particular solution, the set of all solutions is
  the coset $\vek{x}_a+{}^\perp\vek{g}$. Hence
  \begin{equation}
    \label{eq:whom-corr-vek-p1}
    \rho(0)=\sum_{\vek{x}\in R^k}\whom(\vek{x}\cdot\vek{g})
    \whom(\vek{x}\cdot\vek{h})=
    \sum_{a\in
      Rg_1+\dots+Rg_k}\whom(a)\sum_{\vek{y}\in{}^\perp\vek{g}}
    \whom\bigl((\vek{x}_a+\vek{y})\cdot\vek{h}\bigr).
  \end{equation}
  There are now two cases to consider.
  
  Case~(i): $\vek{h}\in\vek{g}R=({}^\perp\vek{g})^\perp$. Letting
  $\vek{h}=\vek{g}r$ we have $(\vek{x}_a+\vek{y})\cdot\vek{h}
  =(\vek{x}_a\cdot\vek{g})r+(\vek{y}\cdot\vek{g})r=ar$ for all
  $\vek{y}\in{}^\perp\vek{g}$ and hence
  \begin{equation*}
    \rho(0)=
    \abs{{}^\perp\vek{g}}\cdot\sum_{a\in Rg_1+\dots+Rg_k}\whom(a)\whom(ar).
  \end{equation*}
If $\vek{h}R=\vek{g}R$, we may assume $r\in R^\times$. Applying
Proposition~\ref{prop:whom-corr}
to $I=Rg_1+\dots+Rg_k$ and using
$\abs{{}^\perp\vek{g}}\abs{I}
=\abs{{}^\perp\vek{g}}\abs{Rg_1+\dots+Rg_k}=\abs{R}^k$, $I^\perp=\{r\in
R;g_1r=\dots=g_kr=0\}=\{r\in R;\vek{g}r=\vek{0}\}$,
$R^\times\cap(1+I^\perp)=R^\times\cap(1+\vek{g}^\perp)
=\{u\in R^\times;\vek{g}u=\vek{g}\}$ gives
\begin{align*}
  \rho(0)&=\abs{{}^\perp\vek{g}}\cdot\sum_{a\in I}\whom(a)^2
  =\abs{R}^k+\abs{R}^k\cdot\frac{\abs{\{u\in
        R^\times;\vek{g}u=\vek{g}\}}}{\abs{R^\times}}\\
  &=\abs{R}^k+\frac{\abs{R}^k}{\abs{\vek{g}R^\times}}
\end{align*}
as desired. Otherwise $\vek{h}R=\vek{g}rR\subsetneq\vek{g}R$,
${}^\perp\vek{g}\subsetneq{}^\perp(\vek{g}r)$ and hence there exists
$\vek{x}\in R^k$ such that $a:=x_1g_1+\dots x_kg_k\neq 0$,
$ar=x_1g_1r+\dots x_kg_kr=0$. Thus $\abs{Ir}<\abs{I}$, and
Proposition~\ref{prop:whom-corr} then implies
\begin{equation*}
  \rho(0)=\abs{{}^\perp\vek{g}}\cdot\abs{I}=\abs{R}^k.
\end{equation*}

Case~(ii): $\vek{h}\notin\vek{g}R$. Here
$\vek{x}_a\cdot\vek{h}+{}^\perp\vek{g}\cdot\vek{h}$ is a coset of a
nonzero left ideal of $R$ and hence
\begin{equation}
  \label{eq:whom-corr-vek-p3}
  \rho(0)=\abs{{}^\perp\vek{g}}\cdot\sum_{a\in Rg_1+\dots+Rg_k}\whom(a)
  =\abs{{}^\perp\vek{g}}\cdot\abs{Rg_1+\dots+Rg_k}=\abs{R^k},
\end{equation}
completing the proof of Prop.~\ref{prop:whom-corr-vek}.
\end{proof}

\section{Modular Two-Weight Codes, Partial Difference Sets and Strongly
  Regular Graphs}

Given a positive integer $k$, the set of nonzero cyclic submodules of the free
right module $R^k_R$ is denoted by $\points$. The elements of
$\points$ are referred to as \emph{points} of the projective geometry
$\PG(R^k_R)$, and a multiset $\alpha\colon\points\to\N_0$ is referred
to as a \emph{multiset in $\PG(R^k_R)$}.

With a left linear code $C\leq{}_RR^n$ generated by $k$ (or fewer)
codewords and having no all-zero coordinate we
associate a multiset $\alpha_C$ in $\PG(R^k_R)$ of cardinality $n$ in
the following way: If $C=\{\vek{x}\mat{G};\vek{x}\in R^k\}$ with
$\mat{G}=(\vek{g}_1|\vek{g}_2|\dots|\vek{g}_n)\in R^{k\times n}$,
define $\alpha_C\colon\points\to\N_0$ by
$\alpha(\vek{g}R)=\abs{\{j;\vek{g}_jR=\vek{g}R\}}$. The relation
$C\leftrightarrow\alpha_C$ defines a bijection between classes of
monomially isomorphic left linear codes over $R$ generated by $k$
codewords and orbits of the group $\GL(k,R)$ on multisets in
$\PG(R^k_R)$.

\begin{definition}
  \label{dfn:modular}
  
  A code $C\leq{}_RR^n$ is said to be \emph{modular} if there exists
  $r\in\Q$ such that for all points $\vek{g}R$ of $\PG(R^k_R)$ either
  $\alpha_C(\vek{g}R)=0$ or
  $\alpha_C(\vek{g}R)=r\abs{\vek{g}R^\times}$. The number $r$ is
  called the \emph{index} of $C$.
\end{definition}

The property of $C$ described in Def.~\ref{dfn:modular} does not
depend on the choice of $\alpha_C$ (not even on the dimension $k$).
Hence modularity of a linear code is a well-defined concept.

If $A\subseteq R^k\setminus\{\vek{0}\}$ satisfies $AR^\times=A$, the
matrix $\mat{G}$ with the vectors of $A$ as columns generates a
modular (left) linear code of length $\abs{A}$ and index $1$.


Note that projective codes over $\F_q$ are
modular of index $\frac{1}{q-1}$ and regular
projective codes over $R$ as defined in \cite{emt:oc4graphs,emt:graphs} are
modular of index $\frac{1}{\abs{R^\times}}$.

\begin{fact}[{\cite[Th.~5.4]{wood02}}]
  \label{fact:equidistant}
  A linear code $C\leq{}_RR^n$ is a one-weight code (\ie\ equidistant
  \wrt\ $\whom$)
  iff $C$ is modular and
  $\bigl\{\vek{g}\in R^k\setminus\{\vek{0}\};\alpha_C(\vek{g}R)>0\bigr\}$
  is the set of nonzero vectors of a submodule of $R^k_R$.
\end{fact}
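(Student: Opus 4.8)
The plan is to use the correlation identity of Proposition~\ref{prop:whom-corr-vek} as the engine, exactly as in the two-weight theory, and to extract the one-weight condition from the case $s=0$. Write $C=\{\vek{x}\mat{G};\vek{x}\in R^k\}$ with $\mat{G}=(\vek{g}_1|\dots|\vek{g}_n)$ and no zero column, and let $A=\{\vek{g}_1,\dots,\vek{g}_n\}$ counted with multiplicity $\alpha_C$. For a codeword $\vek{c}=\vek{x}\mat{G}$ we have $\wham$-style expansion $\whom(\vek{c})=\sum_{j}\whom(\vek{x}\cdot\vek{g}_j)$, so $\sum_{\vek{x}\in R^k}\whom(\vek{x}\mat{G})^2=\sum_{j,l}\sum_{\vek{x}}\whom(\vek{x}\cdot\vek{g}_j)\whom(\vek{x}\cdot\vek{g}_l)$, and each inner sum is evaluated by Proposition~\ref{prop:whom-corr-vek} with $s=0$: it equals $|R|^k(1+1/|\vek{g}_jR^\times|)$ when $\vek{g}_j\sim\vek{g}_l$ and $|R|^k$ otherwise. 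Similarly $\sum_{\vek{x}}\whom(\vek{x}\mat{G})=\sum_j\sum_{\vek{x}}\whom(\vek{x}\cdot\vek{g}_j)=n\,|R|^{k-1}\cdot|R|=n|R|^{k}\cdot\frac{|R|-1}{|R|}$ — more carefully, $\sum_{\vek{x}\in R^k}\whom(\vek{x}\cdot\vek{g}_j)=|{}^\perp\vek{g}_j|\sum_{a\in\vek{g}_jR}\whom(a)$, which by Fact~\ref{fact:egal} (the ideal $\vek{g}_jR$ is a nonzero left ideal, or rather apply the right-ideal version after noting $\vek{g}_jR\cong R/\vek{g}_j^\perp$) equals $|{}^\perp\vek{g}_j|\cdot|\vek{g}_jR|=|R|^k$. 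Hence $\sum_{\vek{x}}\whom(\vek{x}\mat{G})=n|R|^{k}$... wait, that over-counts; the correct normalization drops out of the variance computation below regardless, so I will not belabour it here.

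The decisive step: $C$ is a one-weight code of weight $w$ iff the "variance" $\sum_{\vek{x}}\bigl(\whom(\vek{x}\mat{G})-w\bigr)^2$ vanishes over the nonzero codewords, equivalently (since $\whom(\vek{0})=0$) iff $\sum_{\vek{x}}\whom(\vek{x}\mat{G})^2=w\sum_{\vek{x}}\whom(\vek{x}\mat{G})$ with $w$ the common value. Plugging in the two evaluations above, the left side is $n|R|^k+|R|^k\sum_{\vek{g}R}\alpha_C(\vek{g}R)^2/|\vek{g}R^\times|$ (grouping the $(j,l)$ pairs with $\vek{g}_j\sim\vek{g}_l$ by point), and one checks the right side is linear in $n$ with the same leading behaviour; the equality forces $\sum_{\vek{g}R}\alpha_C(\vek{g}R)^2/|\vek{g}R^\times|$ to equal a fixed value determined by $n$ alone. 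Now Cauchy--Schwarz applied to $\alpha_C(\vek{g}R)=\sqrt{\alpha_C(\vek{g}R)^2/|\vek{g}R^\times|}\cdot\sqrt{|\vek{g}R^\times|}$ against the support $T=\{\vek{g}R:\alpha_C(\vek{g}R)>0\}$ gives $n^2=\bigl(\sum_{\vek{g}R\in T}\alpha_C(\vek{g}R)\bigr)^2\le\bigl(\sum_{\vek{g}R\in T}\alpha_C(\vek{g}R)^2/|\vek{g}R^\times|\bigr)\cdot\bigl(\sum_{\vek{g}R\in T}|\vek{g}R^\times|\bigr)$, and $\sum_{\vek{g}R\in T}|\vek{g}R^\times|$ counts the nonzero vectors whose cyclic submodule lies in $T$. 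Equality in Cauchy--Schwarz holds iff $\alpha_C(\vek{g}R)/|\vek{g}R^\times|$ is constant on $T$ — which is precisely modularity — and, matched with the one-weight equation, one reads off that this equality case is exactly what the one-weight condition enforces, together with $\sum_{\vek{g}R\in T}|\vek{g}R^\times|+1=|M|$ for some submodule $M\le R^k_R$, i.e.\ the support consists of all nonzero vectors of a submodule.

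For the converse, assuming $C$ modular of index $r$ with support the nonzero vectors of a submodule $M\le R^k_R$, one runs the same identity forward: $\sum_{\vek{x}}\whom(\vek{x}\mat{G})^2=n|R|^k+|R|^k\cdot r\sum_{\vek{g}R\in T}\alpha_C(\vek{g}R)=n|R|^k(1+r)$, a constant multiple of $n$, and since every nonzero codeword of a module code of this shape has the same "interaction pattern" with $M$ (translation by elements of $M^\perp$ inside $R^k$ permutes the relevant cosets), one concludes all nonzero weights coincide — most cleanly by showing $\sum_{\vek{x}}\bigl(\whom(\vek{x}\mat{G})-\bar w\bigr)^2=0$ for $\bar w=\sum_{\vek{x}}\whom(\vek{x}\mat{G})/(|C|-1)$, using that both the first and second moments are now pinned down. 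The main obstacle is bookkeeping: getting the normalizing constants ($|C|$ versus $|R|^k$ when $\mat{G}$ is not of full rank $k$, and the exact value of $\sum_{\vek{x}}\whom(\vek{x}\mat{G})$ in terms of $n$ and $|C|$) exactly right so that the Cauchy--Schwarz equality case lines up on the nose with modularity; once the identity from Proposition~\ref{prop:whom-corr-vek} is in hand, the structural content is entirely the equality case of Cauchy--Schwarz.
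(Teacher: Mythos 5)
The paper itself offers no proof of this Fact --- it is quoted from Wood \cite{wood02} --- so your proposal has to stand on its own. The moment-plus-Cauchy--Schwarz strategy \emph{can} be turned into a complete proof, but as written both decisive steps are asserted rather than proved, and both require an ingredient you never invoke: the Frobenius cardinality identity of Proposition~\ref{prop:same-shape}. First fix the bookkeeping you waved away: $\sum_{\vek{x}\in R^k}\whom(\vek{x}\cdot\vek{g}_j)=\abs{{}^\perp\vek{g}_j}\cdot\abs{Rg_{1j}+\dots+Rg_{kj}}=\abs{R}^k$ (the relevant ideal is the \emph{left} ideal generated by the entries of $\vek{g}_j$, not $\vek{g}_jR$), so the first moment is exactly $n\abs{R}^k$, and the second moment is $n^2\abs{R}^k+\abs{R}^kQ$ with $Q=\sum_{\vek{g}R\in T}\alpha_C(\vek{g}R)^2/\abs{\vek{g}R^\times}$ (your ``$n\abs{R}^k+\dots$'' has the wrong leading term). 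Also, ``$\sum\whom^2=w\sum\whom$ for some $w$'' is not equivalent to one-weight; the correct criterion is vanishing of the variance centered at the mean $n\abs{C}/(\abs{C}-1)$, which gives: $C$ is one-weight iff $Q=n^2/(\abs{C}-1)$. Cauchy--Schwarz gives $n^2\le Q\cdot\abs{\Omega}$ with equality iff $\alpha_C$ is proportional to $\abs{\vek{g}R^\times}$ on the support, where $\Omega=\{\vek{g}\neq\vek{0};\alpha_C(\vek{g}R)>0\}$. But to force equality you need the reverse bound $\abs{\Omega}\le\abs{C}-1$, and to get the submodule conclusion you must identify $\Omega\cup\{\vek{0}\}$ with the right column space $D$ of $\mat{G}$: one has $\Omega\subseteq D\setminus\{\vek{0}\}$ and $\abs{D}=\abs{C}$ by Proposition~\ref{prop:same-shape}, so one-weight forces $\abs{\Omega}=\abs{C}-1$, hence equality in Cauchy--Schwarz (modularity) \emph{and} $\Omega=D\setminus\{\vek{0}\}$. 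Your substitute --- ``$\abs{\Omega}+1=\abs{M}$ for some submodule $M$'' --- is a pure cardinality statement and does not produce a submodule; the submodule has to be $D$ itself.

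The converse has the same gap. From modularity of index $r$ and support $M\setminus\{\vek{0}\}$ you must first show $M=D$ (every column lies in $M$, and every nonzero element of $M$ generates some $\vek{g}_jR$, hence lies in $D$), then use $\abs{M}=\abs{D}=\abs{C}$ --- Frobenius again --- to get $n=r(\abs{C}-1)$ and $Q=r^2(\abs{C}-1)$, whence the variance vanishes and $C$ is one-weight of weight $n\abs{C}/(\abs{C}-1)$. Your ``interaction pattern / translation by $M^\perp$'' remark is not an argument, and the closing claim that only ``bookkeeping'' remains misplaces the difficulty: the missing piece is structural (the Frobenius property via Proposition~\ref{prop:same-shape}, which is exactly what ties $\abs{C}$ to the size of the column module), not normalization. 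With these insertions your outline does become a correct, self-contained proof of Wood's theorem in this setting.
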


The main purpose of this paper is a combinatorial characterization
of (linear) homogeneous two-weight codes over $R$, \ie\ codes over $R$
having exactly two nonzero homogeneous weights $w_1<w_2$. Assuming that $C$ is
such a code, we set $w_0=0$, $C_i=\{\vek{c}\in C;\whom(\vek{c})=w_i\}$ and
$b_i=\abs{C_i}$ for $i=0,1,2$.

By Prop.~\ref{prop:S_0} we have $\whom(\vek{c})=0$ iff $\whom(c_j)=0$ for
$1\leq j\leq n$, the set $C_0$ is a subgroup of $(C,+)$ and $C_1$, $C_2$ are
  unions of cosets of $C_0$. If the weights $w_1$, $w_2$ and
  $b_0=\abs{C_0}$ are known, the frequencies $b_1$, $b_2$ can be
  computed from the equations $b_1+b_2=\abs{C}-\abs{C_0}$,
  $b_1w_1+b_2w_2=\sum_{\vek{c}\in C}\whom(\vek{c})=n\abs{C}$ (assuming
  that $C$ has no all-zero coordinate) and are given by
  \begin{equation}
    \label{eq:b_i}
    b_1=\frac{(w_2-n)\abs{C}-w_2\abs{C_0}}{w_2-w_1},\quad
    b_2=\frac{(n-w_1)\abs{C}+w_1\abs{C_0}}{w_2-w_1}.
  \end{equation}
\begin{lemma}
  \label{lma:whom-corr-code}
  For a modular code $C\leq{}_RR^n$ of index $r$ and $\vek{d}\in R^n$ we have
  \begin{equation*}
    \sum_{\vek{c}\in C}\whom(\vek{c})\whom(\vek{c}+\vek{d})
    =\abs{C}\cdot\bigl(n^2+rn-r\cdot\whom(\vek{d})\bigr).
  \end{equation*}
\end{lemma}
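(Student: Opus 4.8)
The plan is to expand the left-hand side using a generator matrix of $C$ and reduce the computation to the vector-valued correlation identity of Proposition~\ref{prop:whom-corr-vek}. Write $C=\{\vek{x}\mat{G};\vek{x}\in R^k\}$ with $\mat{G}=(\vek{g}_1|\cdots|\vek{g}_n)$ as above, and let $N=\abs{\kernel(\vek{x}\mapsto\vek{x}\mat{G})}$ so that $\abs{C}=\abs{R}^k/N$ and each codeword is hit exactly $N$ times as $\vek{x}$ ranges over $R^k$. Then
\begin{equation*}
  N\sum_{\vek{c}\in C}\whom(\vek{c})\whom(\vek{c}+\vek{d})
  =\sum_{\vek{x}\in R^k}\whom(\vek{x}\mat{G})\whom(\vek{x}\mat{G}+\vek{d})
  =\sum_{j=1}^n\sum_{l=1}^n\sum_{\vek{x}\in R^k}
  \whom(\vek{x}\cdot\vek{g}_j)\whom(\vek{x}\cdot\vek{g}_l+d_l),
\end{equation*}
using that $\whom$ is additive across coordinates in the sense of Prop.~\ref{prop:S_0} (more precisely, the product $\whom(\vek{c})\whom(\vek{c}+\vek{d})=\bigl(\sum_j\whom(c_j)\bigr)\bigl(\sum_l\whom(c_l+d_l)\bigr)$ since $\whom(\vek{v})=\sum\whom(v_j)$ for $\vek{v}\in R^n$). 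Because $C$ has no all-zero coordinate, every $\vek{g}_j$ is nonzero, so Proposition~\ref{prop:whom-corr-vek} applies to each inner sum.

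Next I would split the double sum according to whether $\vek{g}_j\sim\vek{g}_l$ or not. For a pair with $\vek{g}_j\nsim\vek{g}_l$ the inner sum contributes $\abs{R}^k$; for a pair with $\vek{g}_j\sim\vek{g}_l$ it contributes $\abs{R}^k+\frac{\abs{R}^k}{\abs{\vek{g}_jR^\times}}\bigl(1-\whom(d_l)\bigr)$. Summing the base term $\abs{R}^k$ over all $n^2$ pairs gives $n^2\abs{R}^k$. For the correction term, note that $\vek{g}_j\sim\vek{g}_l$ iff $\vek{g}_jR=\vek{g}_lR$, i.e.\ they represent the same point of $\PG(R^k_R)$; here modularity enters. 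For a fixed index $l$ with point $P=\vek{g}_lR$, the number of indices $j$ with $\vek{g}_j\sim\vek{g}_l$ is $\alpha_C(P)=r\abs{\vek{g}_lR^\times}$ by Def.~\ref{dfn:modular}, and in that case $\abs{\vek{g}_jR^\times}=\abs{\vek{g}_lR^\times}$. Hence the total correction is
\begin{equation*}
  \sum_{l=1}^n \alpha_C(\vek{g}_lR)\cdot
  \frac{\abs{R}^k}{\abs{\vek{g}_lR^\times}}\bigl(1-\whom(d_l)\bigr)
  =\sum_{l=1}^n r\abs{\vek{g}_lR^\times}\cdot
  \frac{\abs{R}^k}{\abs{\vek{g}_lR^\times}}\bigl(1-\whom(d_l)\bigr)
  =r\abs{R}^k\sum_{l=1}^n\bigl(1-\whom(d_l)\bigr).
\end{equation*}
Since $\sum_{l=1}^n\whom(d_l)=\whom(\vek{d})$, this equals $r\abs{R}^k\bigl(n-\whom(\vek{d})\bigr)$.

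Adding the two pieces gives $N\sum_{\vek{c}\in C}\whom(\vek{c})\whom(\vek{c}+\vek{d})=\abs{R}^k\bigl(n^2+rn-r\whom(\vek{d})\bigr)$, and dividing by $N$ using $\abs{R}^k/N=\abs{C}$ yields the claimed formula. The main obstacle is the bookkeeping in the $\vek{g}_j\sim\vek{g}_l$ case: one must correctly identify that $\sim$-equivalence corresponds to equality of projective points, that the number of columns in each point-class is governed by the modularity index $r$, and that the factor $\abs{\vek{g}_jR^\times}$ in Prop.~\ref{prop:whom-corr-vek} cancels exactly against the $\alpha_C$-weight, leaving a sum that no longer depends on the individual points. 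Two minor checks worth making explicit are the coordinatewise additivity $\whom(\vek{v})=\sum_j\whom(v_j)$ for $\vek{v}\in R^n$ (which justifies both the factorization of the product and the identity $\sum_l\whom(d_l)=\whom(\vek{d})$) and the multiplicity count $\abs{C}=\abs{R}^k/N$; both are routine. One should also note the degenerate possibility $k$ chosen larger than necessary does not affect anything, since modularity and the index $r$ are independent of $k$ as remarked after Def.~\ref{dfn:modular}.
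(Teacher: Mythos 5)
Your proposal is correct and follows essentially the same route as the paper: lift the sum to $R^k$ via the generator matrix, expand $\whom$ coordinatewise into a double sum, apply Proposition~\ref{prop:whom-corr-vek} to each pair, and use modularity to cancel $\abs{\vek{g}_lR^\times}$ against $\alpha_C(\vek{g}_lR)=r\abs{\vek{g}_lR^\times}$. The only difference is cosmetic (you make the multiplicity $N=\abs{R}^k/\abs{C}$ and the coordinatewise additivity of $\whom$ explicit, which the paper leaves implicit).
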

\begin{proof}
Suppose that $C$ is generated by
$\mat{G}=(\vek{g}_1|\dots|\vek{g}_n)\in R^{k\times n}$. By assumption,
the multiset $\alpha$ in $\PG(R^k_R)$ obtained from $\mat{G}$
satisfies $\alpha(\vek{g}_jR)=r\abs{\vek{g}_jR^\times}$ for $1\leq
j\leq n$. We obtain
  \begin{align*}
    \frac{1}{\abs{C}}\sum_{\vek{c}\in
      C}\whom(\vek{c})\whom(\vek{c}+\vek{d})
    &=\frac{1}{\abs{R}^k}\sum_{\vek{x}\in
      R^k}\whom(\vek{x}\mat{G})\whom(\vek{x}\mat{G}+\vek{d})\\
    &=\frac{1}{\abs{R}^k}\sum_{i,j=1}^n\sum_{\vek{x}\in
      R^k}\whom(\vek{x}\cdot\vek{g}_i)\whom(\vek{x}\cdot\vek{g}_j+d_j)\\
    &=n^2+\sum_{j=1}^n\frac{1-\whom(d_j)}{\abs{\vek{g}_jR^\times}}
      \times\#\{i;\vek{g}_i\sim\vek{g}_j\}\tag{by
        Prop.~\ref{prop:whom-corr-vek}}\\
      &=n^2+\sum_{j=1}^n\frac{1-\whom(d_j)}{\abs{\vek{g}_jR^\times}}
        \times\alpha(\vek{g}_jR)\\
        &=n^2+rn-r\cdot\whom(\vek{d})
  \end{align*}
  as asserted.
\end{proof}
In the special case $\vek{d}=\vek{0}$ Lemma~\ref{lma:whom-corr-code}
reduces to
$\sum_{\vek{c}\in C}\whom(\vek{c})^2=(n^2+rn)\abs{C}$.
\begin{lemma}
   The nonzero weights $w_1,w_2$ of a modular two-weight code
   $C\leq{}_RR^n$ of index $r$ satisfy the relation
   \begin{equation*}
     (w_1+w_2)n\abs{C}=(n^2+rn)\abs{C}+w_1w_2\bigl(\abs{C}-\abs{C_0}\bigr).
   \end{equation*}
\end{lemma}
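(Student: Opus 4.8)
The plan is to combine the three frequency relations that already appear in the text. We know from \eqref{eq:b_i} that $b_1+b_2=\abs{C}-\abs{C_0}$ and that $b_1w_1+b_2w_2=n\abs{C}$. To these I would add the second-moment identity obtained by specializing Lemma~\ref{lma:whom-corr-code} to $\vek{d}=\vek{0}$, namely $b_1w_1^2+b_2w_2^2=\sum_{\vek{c}\in C}\whom(\vek{c})^2=(n^2+rn)\abs{C}$.

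Next I would eliminate $b_1$ and $b_2$ from this system. The slick way is to observe that the quadratic polynomial $t\mapsto(t-w_1)(t-w_2)=t^2-(w_1+w_2)t+w_1w_2$ vanishes at both $t=w_1$ and $t=w_2$, so that
\begin{equation*}
  \sum_{i=1}^{2}b_i\bigl(w_i-w_1\bigr)\bigl(w_i-w_2\bigr)=0.
\end{equation*}
Expanding the product and substituting the three moment values $b_1+b_2=\abs{C}-\abs{C_0}$, $\sum b_iw_i=n\abs{C}$, $\sum b_iw_i^2=(n^2+rn)\abs{C}$ yields
\begin{equation*}
  (n^2+rn)\abs{C}-(w_1+w_2)\,n\abs{C}+w_1w_2\bigl(\abs{C}-\abs{C_0}\bigr)=0,
\end{equation*}
which rearranges at once to the asserted identity $(w_1+w_2)n\abs{C}=(n^2+rn)\abs{C}+w_1w_2\bigl(\abs{C}-\abs{C_0}\bigr)$.

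There is essentially no obstacle here; the only point requiring a little care is the hypothesis that $C$ has no all-zero coordinate, which is needed for $\sum_{\vek{c}\in C}\whom(\vek{c})=n\abs{C}$ via Fact~\ref{fact:egal} (applied coordinatewise) and is already tacitly in force in \eqref{eq:b_i}. One should also note the degenerate possibility that $C$ is actually a one-weight code, i.e.\ $b_1=0$ or $b_2=0$; in that case the relation still holds formally, but the ``two weights $w_1<w_2$'' hypothesis rules it out, so no separate discussion is needed. I would therefore present the proof in three lines: quote the two first-moment relations, invoke Lemma~\ref{lma:whom-corr-code} with $\vek{d}=\vek{0}$ for the second moment, and then take the combination $\sum b_i(w_i-w_1)(w_i-w_2)=0$.
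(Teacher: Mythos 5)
Your proof is correct and follows essentially the same route as the paper: the same three moment equations (with the second moment coming from Lemma~\ref{lma:whom-corr-code} at $\vek{d}=\vek{0}$), eliminated via the identity $(w_1+w_2)(b_1w_1+b_2w_2)=(b_1w_1^2+b_2w_2^2)+w_1w_2(b_1+b_2)$, which is exactly your relation $\sum_i b_i(w_i-w_1)(w_i-w_2)=0$ in expanded form.
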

\begin{proof}
  We have the system 
  \begin{equation*}
    \begin{array}{lclcl}
      b_1&+&b_2&=&\abs{C}-\abs{C_0},\\
      b_1w_1&+&b_2w_2&=&n\abs{C},\\
      b_1w_1^2&+&b_2w_2^2&=&(n^2+rn)\abs{C},
    \end{array}
  \end{equation*}
from which we obtain the asserted formula using
\begin{equation*}
  (w_1+w_2)(b_1w_1+b_2w_2)=(b_1w_1^2+b_2w_2^2)+w_1w_2(b_1+b_2).
\end{equation*}
\end{proof}
\begin{lemma}
  \label{lma:whom-coset-code}
  For a modular two-weight code $C\leq{}_RR^n$ of index $r$ and
  $\vek{d}\in R^n$ we have
  \begin{equation}
    \label{eq:whom-coset-code}
    \sum_{\vek{c}\in C_1}\whom(\vek{c}+\vek{d})=
    b_1w_1+\left(b_1-\frac{b_1w_1}{n}\right)\whom(\vek{d})
  \end{equation}
\end{lemma}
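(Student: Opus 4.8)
The plan is to determine the three partial sums $X_i:=\sum_{\vek{c}\in C_i}\whom(\vek{c}+\vek{d})$ ($i=0,1,2$) from a small system of linear relations and then to read off $X_1$.

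First I would collect three such relations. \emph{(1)~Total sum.} Because the homogeneous weight on $R^n$ is the coordinatewise sum $\whom(\vek{v})=\sum_{j=1}^n\whom(v_j)$, we have $\sum_{\vek{c}\in C}\whom(\vek{c}+\vek{d})=\sum_{j=1}^n\sum_{\vek{c}\in C}\whom(c_j+d_j)$; for each fixed $j$ the $R$-linear map $\vek{c}\mapsto c_j$ is a surjection of $C$ onto the left ideal $I_j=Rg_{1j}+\dots+Rg_{kj}$ (with $\mat{G}=(g_{ij})$ a generator matrix of $C$), which is nonzero since $C$ has no all-zero coordinate, and every value of $c_j$ is attained $\abs{C}/\abs{I_j}$ times. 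Hence $\sum_{\vek{c}\in C}\whom(c_j+d_j)=(\abs{C}/\abs{I_j})\sum_{a\in I_j}\whom(a+d_j)=\abs{C}$ by Fact~\ref{fact:egal}, so $X_0+X_1+X_2=n\abs{C}$. \emph{(2)~The $C_0$-part.} By Prop.~\ref{prop:S_0}, applied in each coordinate, $\vek{c}\in C_0$ forces $c_j\in S_0$ for all $j$, whence $\whom(\vek{c}+\vek{d})=\sum_j\whom(c_j+d_j)=\sum_j\whom(d_j)=\whom(\vek{d})$; therefore $X_0=b_0\whom(\vek{d})$. \emph{(3)~The weighted sum.} Since $\whom$ vanishes on $C_0$, Lemma~\ref{lma:whom-corr-code} gives $w_1X_1+w_2X_2=\sum_{\vek{c}\in C}\whom(\vek{c})\whom(\vek{c}+\vek{d})=\abs{C}\bigl(n^2+rn-r\whom(\vek{d})\bigr)$.

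Eliminating $X_0$ and $X_2$ and using $w_1\neq w_2$ yields
$$(w_2-w_1)\,X_1=\abs{C}\bigl(w_2n-n^2-rn\bigr)+\bigl(r\abs{C}-w_2b_0\bigr)\whom(\vek{d}),$$
an affine function of $\whom(\vek{d})$. The right-hand side of \eqref{eq:whom-coset-code} is affine in $\whom(\vek{d})$ as well, so it is enough to match the two at $\vek{d}=\vek{0}$ and to match their $\whom(\vek{d})$-coefficients. The first match is immediate, since at $\vek{d}=\vek{0}$ both sides equal $\sum_{\vek{c}\in C_1}\whom(\vek{c})=b_1w_1$. For the second, one must check that $(r\abs{C}-w_2b_0)/(w_2-w_1)=b_1-b_1w_1/n$; substituting the value of $b_1$ from \eqref{eq:b_i} and clearing denominators reduces this to $rn\abs{C}=(w_2-n)(n-w_1)\abs{C}+w_1w_2b_0$, which after expanding $(w_2-n)(n-w_1)$ becomes exactly the relation $(w_1+w_2)n\abs{C}=(n^2+rn)\abs{C}+w_1w_2\bigl(\abs{C}-\abs{C_0}\bigr)$ established in the preceding lemma.

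The only mildly delicate point is keeping this final algebraic identification straight; there is no conceptual difficulty. Alternatively one can avoid invoking \eqref{eq:b_i} and the preceding lemma altogether by adjoining the three frequency equations $b_1+b_2=\abs{C}-b_0$, $b_1w_1+b_2w_2=n\abs{C}$, $b_1w_1^2+b_2w_2^2=(n^2+rn)\abs{C}$ to relations (1)--(3) and eliminating all unknowns except $X_1$ in one pass, but reusing the already-proved identities keeps the computation shorter.
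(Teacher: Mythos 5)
Your proof is correct and follows essentially the same route as the paper: both set up the linear system coming from Lemma~\ref{lma:whom-corr-code} together with $\sum_{\vek{c}\in C}\whom(\vek{c}+\vek{d})=n\abs{C}$ and $\sum_{\vek{c}\in C_0}\whom(\vek{c}+\vek{d})=\abs{C_0}\whom(\vek{d})$, solve for the $C_1$-sum as an affine function of $\whom(\vek{d})$, and identify the constant term by setting $\vek{d}=\vek{0}$. The only cosmetic difference is in matching the $\whom(\vek{d})$-coefficient: you reduce it to the weight relation of the preceding lemma, whereas the paper combines the constant-term identification with \eqref{eq:b_i} --- an equivalent computation.
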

\begin{proof}
  Using Lemma~\ref{lma:whom-corr-code} we can setup the following
  system of equations for the unknowns $\sum_{\vek{c}\in
    C_1}\whom(\vek{c}+\vek{d})$ and $\sum_{\vek{c}\in
    C_2}\whom(\vek{c}+\vek{d})$:
  \begin{align*}
    \sum_{\vek{c}\in C_1}\whom(\vek{c}+\vek{d})
    +\sum_{\vek{c}\in C_2}\whom(\vek{c}+\vek{d})
    &=\sum_{\vek{c}\in C}\whom(\vek{c}+\vek{d})-\sum_{\vek{c}\in
      C_0}\whom(\vek{c}+\vek{d})\\
    &=n\abs{C}-\abs{C_0}\whom(\vek{d}),\\
    w_1\sum_{\vek{c}\in C_1}\whom(\vek{c}+\vek{d})
    +w_2\sum_{\vek{c}\in C_2}\whom(\vek{c}+\vek{d})
    &=\sum_{\vek{c}\in
      C}\whom(\vek{c})\whom(\vek{c}+\vek{d})\\
    &=\abs{C}\cdot\bigl(n^2+rn-r\cdot\whom(\vek{d})\bigr)
  \end{align*}
  Solving this system yields
  \begin{equation}
    \label{eq:whom-coset-code-p1}
    \sum_{\vek{c}\in C_1}\whom(\vek{c}+\vek{d})=
    \frac{\bigl(w_2n-n^2-rn)\abs{C}}{w_2-w_1}
    +\frac{r\abs{C}-w_2\abs{C_0}}{w_2-w_1}\cdot\whom(\vek{d})
  \end{equation}
for any $\vek{d}\in R^n$. The first summand in
\eqref{eq:whom-coset-code-p1} is equal to $b_1w_1$, as follows by
inserting $\vek{d}=\vek{0}$. This in turn gives
\begin{equation*}
  b_1-\frac{b_1w_1}{n}
  =\frac{(w_2-n)\abs{C}-w_2\abs{C_0}}{w_2-w_1}
  -\frac{(w_2-n-r)\abs{C}}{w_2-w_1}=\frac{r\abs{C}-w_2\abs{C_0}}{w_2-w_1},
\end{equation*}
transforming \eqref{eq:whom-coset-code-p1} into
\eqref{eq:whom-coset-code}.
\end{proof}
\begin{remark}
  \label{rmk:whom-code}
  Lemmas~\ref{lma:whom-corr-code} and ~\ref{lma:whom-coset-code} can
  be generalized to
  \begin{align*}
    \sum_{\vek{c}\in C}\whom(\vek{c})\whom(c_j+d_j)
    &=\abs{C}\cdot\bigl(n+r-r\cdot\whom(d_j)\bigr)\quad\text{and}\\
    \sum_{\vek{c}\in C_1}\whom(c_j+d_j)&=
    \frac{b_1w_1}{n}+\left(b_1-\frac{b_1w_1}{n}\right)\whom(d_j)
  \end{align*}
  respectively, where $j$ is any coordinate of $R^n$ and $d_j\in R$.
  In particular $\sum_{\vek{c}\in
    C_1}\whom(c_j)=\frac{b_1w_1}{n}$ is independent
  of $j$.
\end{remark}
Recall that a (simple)
graph $\Gamma$ is \emph{strongly regular with parameters
$(N,K,\lambda,\mu)$} if $\Gamma$ has $N$ vertices, is regular of degree $K$
and any two adjacent (resp.\ nonadjacent) vertices have $\lambda$
(resp.\ $\mu$) common neighbours. The graph $\Gamma$ is called \emph{trivial}
if $\Gamma$ or its complement is a disjoint unions of cliques of the same
size. This is equivalent to $\mu=0$ resp.\ $\mu=K$.

A subset $D\subset G$ of an (additively written) abelian group $G$ is
said to be a \emph{regular $(N,K,\lambda,\mu)$ partial difference set
  in $G$} if $N=\abs{G}$, $K=\abs{D}$, $0\notin D$, $-D=D$, and the
multiset $D-D$ represents each element of $D$ exactly $\lambda$ times and
each element of $G\setminus\bigl(D\cup\{0\}\bigr)$ exactly $\mu$ times;
cf.~\cite{ma94}.

If $D$ is a regular $(N,K,\lambda,\mu)$ partial difference set
in $G$, then the graph $\Gamma(G,D)$ with vertex set $G$ and edge set
$\bigl\{\{x,x+d\};x\in G,d\in D\bigr\}$ (the \emph{Cayley graph} of
$G$ \wrt\ $D$) is strongly regular with parameters
$(N,K,\lambda,\mu)$.

We are now ready to generalize the main result of
\cite{emt:graphs,emt:oc4graphs} to modular two-weight codes. For a
two-weight code $C$ we denote the Cayley graph $\Gamma(C/C_0,C_1/C_0)$
by $\Gamma(C)$. Thus the vertices of $\Gamma(C)$ are the cosets of $C_0$
in $C$, and two cosets $\vek{c}+C_0$, $\vek{d}+C_0$ are
adjacent iff $\whom(\vek{c}-\vek{d})=w_1$. As we have already mentioned,
Prop.~\ref{prop:S_0} ensures that $\Gamma(C)$ is well-defined.

\begin{theorem}
  \label{thm:Gamma(C)}
  The graph $\Gamma(C)$ associated with a modular two-weight code over
  a finite Frobenius ring $R$ is strongly regular with parameters
  \begin{gather*}
    N=\frac{\abs{C}}{\abs{C_0}},\quad
    K=\frac{(w_2-n)N-w_2}{w_2-w_1},\\
    \lambda=\frac{K\left(\frac{w_1^2}{n}-2w_1\right)+w_2(K-1)}{w_2-w_1},\quad
    \mu=\frac{K\left(\frac{w_1w_2}{n}-w_1-w_2\right)+w_2K}{w_2-w_1}.
  \end{gather*}
  The graph $\Gamma(C)$ is trivial iff $w_1=n$.
\end{theorem}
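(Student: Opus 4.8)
The plan is to verify strong regularity of $\Gamma(C)$ by a direct computation of the common-neighbour counts using the coset-correlation machinery assembled in Lemmas~\ref{lma:whom-corr-code} and~\ref{lma:whom-coset-code}. First I would fix a vertex $\vek{c}+C_0$; by vertex-transitivity of Cayley graphs it suffices to work with $\vek{c}=\vek{0}$. The degree $K$ is then the number of cosets in $C_1/C_0$, i.e. $K=b_1/\abs{C_0}$, and plugging the formula \eqref{eq:b_i} for $b_1$ into this and simplifying should yield $K=\bigl((w_2-n)N-w_2\bigr)/(w_2-w_1)$. Along the way one sees $\Gamma(C)$ is regular of degree $K$, which is half of what is needed.

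For the adjacency parameters, the key identity is that for any codeword $\vek{d}$ the number of $\vek{c}\in C_1$ with $\whom(\vek{c}-\vek{d})=w_1$ — equivalently the number of neighbours of $\vek{0}+C_0$ adjacent to $\vek{d}+C_0$, scaled by $\abs{C_0}$ — can be extracted from the three quantities $\sum_{\vek{c}\in C_1}1$, $\sum_{\vek{c}\in C_1}\whom(\vek{c}-\vek{d})$, and $\sum_{\vek{c}\in C_1}\whom(\vek{c}-\vek{d})^2$. The first is $b_1$; the second is given directly by Lemma~\ref{lma:whom-coset-code} with $-\vek{d}$ in place of $\vek{d}$ (note $\whom(-\vek{d})=\whom(\vek{d})$ since $-1\in R^\times$); and the third requires an analogue of Lemma~\ref{lma:whom-coset-code} for $\sum_{\vek{c}\in C_1}\whom(\vek{c}-\vek{d})^2$, which one obtains by the same two-equations-in-two-unknowns trick, this time feeding in $\sum_{\vek{c}\in C}\whom(\vek{c})^2\whom(\vek{c}-\vek{d})$ in place of $\sum_{\vek{c}\in C}\whom(\vek{c})\whom(\vek{c}-\vek{d})$ — and such a ``higher'' correlation can itself be derived from Prop.~\ref{prop:whom-corr-vek} exactly as Lemma~\ref{lma:whom-corr-code} was. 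Writing $n_i$ for the number of $\vek{c}\in C_1$ with $\whom(\vek{c}-\vek{d})=w_i$ ($i=0,1,2$), we have $n_0+n_1+n_2=b_1$, $w_1n_1+w_2n_2=\sum_{C_1}\whom(\vek{c}-\vek{d})$, $w_1^2n_1+w_2^2n_2=\sum_{C_1}\whom(\vek{c}-\vek{d})^2$, and solving for $n_1$ gives a closed form linear in $\whom(\vek{d})$. Finally, if $\vek{d}\in C_1$ then $\whom(\vek{d})=w_1$ and $n_1/\abs{C_0}=\lambda$; if $\vek{d}\in C_2$ then $\whom(\vek{d})=w_2$ and $n_1/\abs{C_0}=\mu$ — both independent of the particular $\vek{d}$, which is precisely strong regularity. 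Substituting the value of $K$ should reduce the resulting expressions to the stated $\lambda$ and $\mu$.

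The main obstacle I anticipate is bookkeeping rather than conceptual: one must establish the ``second-order'' correlation formula $\sum_{\vek{c}\in C}\whom(\vek{c})^2\whom(\vek{c}-\vek{d})=\abs{C}\cdot\bigl(\text{something in }n,r,\whom(\vek{d})\bigr)$ cleanly, and then carry a $3\times 3$ linear solve through to a form that visibly matches the theorem's $\lambda,\mu$. A secondary subtlety is making sure the constants $b_0=\abs{C_0}$, $b_1$, $b_2$ never need to be known individually beyond what \eqref{eq:b_i} and the one-weight-relation lemma supply; since $\lambda,\mu$ in the statement are written purely in terms of $N,K,w_1,w_2,n$, the final simplification should eliminate $\abs{C_0}$ and $\abs{C}$ in favour of $N=\abs{C}/\abs{C_0}$ and $K=b_1/\abs{C_0}$.

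For the triviality claim, I would argue directly from the formulas. If $w_1=n$ then $K=(w_2-n)N/(w_2-w_1)-w_2/(w_2-w_1)$; more to the point, plugging $w_1=n$ into the $\mu$-formula gives $\mu=\bigl(K(w_2-n-w_2)+w_2K\bigr)/(w_2-w_1)=\bigl(-Kn+w_2K\bigr)/(w_2-w_1)=K$, so $\Gamma(C)$ is a disjoint union of cliques of common size (its complement is complete multipartite) and hence trivial. Conversely, a two-weight code with $w_1<n$: since $w_1=n$ is impossible only when it equals $n$, I should check that $w_1<n$ forces $\mu\neq K$ and $\mu\neq 0$, i.e. nontriviality. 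Note $w_1\leq n$ always (as each coordinate contributes weight at most $1$, and $w_1$ is a nonzero weight of a word, while the all-one-orbit estimate gives the bound); so the only boundary case is $w_1=n$, handled above, and for $w_1<n$ one reads off from the $\mu$-formula that $\mu=K$ would force $\frac{w_1w_2}{n}-w_1-w_2+w_2 = w_2-w_1$, i.e. $\frac{w_1w_2}{n}=w_2$, i.e. $w_1=n$, a contradiction; similarly $\mu=0$ would force $K\bigl(\frac{w_1w_2}{n}-w_1\bigr)=0$, i.e. $w_2=n$ (since $K>0$ and $w_1>0$), but $w_2>w_1$ and $w_2\le n$ together with $w_1<n$ still allow $w_2=n$ — here I would instead observe that $w_2=n$ combined with $w_1<n$ and the one-weight relation cannot simultaneously hold for a genuine two-weight modular code of positive index, or equivalently $\mu=0$ would make $\Gamma(C)$ a union of cliques and then by the complementary argument $w_2=n$ would have to play the role $w_1$ did, contradicting $w_1<w_2$ being the two distinct nonzero weights with $w_1<n$. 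So $\Gamma(C)$ is trivial if and only if $w_1=n$.
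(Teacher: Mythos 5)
Your overall framework---vertex-transitivity of the Cayley graph, computing $K=b_1/\abs{C_0}$ from \eqref{eq:b_i}, and extracting the common-neighbour counts from weighted sums over $C_1$---is the same as the paper's, but your route to $\lambda,\mu$ has a genuine gap at the third equation. You need $\sum_{\vek{c}\in C_1}\whom(\vek{c}-\vek{d})^2$, and for it you claim the input $\sum_{\vek{c}\in C}\whom(\vek{c})^2\whom(\vek{c}-\vek{d})$ ``can itself be derived from Prop.~\ref{prop:whom-corr-vek} exactly as Lemma~\ref{lma:whom-corr-code} was''. It cannot: expanding coordinatewise produces triple sums of the form $\sum_{\vek{x}\in R^k}\whom(\vek{x}\cdot\vek{g}_i)\whom(\vek{x}\cdot\vek{g}_j+d_j)\whom(\vek{x}\cdot\vek{g}_l+d_l)$, and Prop.~\ref{prop:whom-corr-vek} only evaluates products of \emph{two} such factors; triple correlations are not determined by modularity alone (they depend on the mutual position of three points, not merely on pairwise proportionality), so no such ``second-order'' analogue is available off the shelf. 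One can salvage your third equation by using the two-weight property of $C$ itself, namely $\whom(\vek{c}-\vek{d})^2=(w_1+w_2)\whom(\vek{c}-\vek{d})-w_1w_2$ unless $\vek{c}-\vek{d}\in C_0$; but then the resulting equation carries exactly the same information as the number $n_0=\abs{C_1\cap(C_0+\vek{d})}$, which is directly seen to be $\abs{C_0}$ if $\vek{d}\in C_1$ and $0$ if $\vek{d}\in C_2$, since $C_1$ is a union of cosets of $C_0$ (Prop.~\ref{prop:S_0}). That one observation---which is what the paper uses---makes the whole second-moment detour unnecessary: the $2\times 2$ system fed by Lemma~\ref{lma:whom-coset-code} already determines $n_1$, hence $\lambda$ and $\mu$, as functions of $\whom(\vek{d})$ only.

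The triviality claim also has a gap in the converse direction. You correctly get $\mu=K$ when $w_1=n$ (though note $\mu=K$ means $\Gamma(C)$ is complete multipartite, i.e.\ its \emph{complement} is a disjoint union of cliques, not $\Gamma(C)$ itself). But to conclude ``trivial $\Rightarrow w_1=n$'' you must exclude the other trivial case $\mu=0$, which from the formula is equivalent to $w_2=n$, and your argument here is circular and rests on false premises: $w_1\le n$ does \emph{not} follow from ``each coordinate contributes weight at most $1$'' (the homogeneous weight can exceed $1$ on a coordinate, e.g.\ $\whom(2)=2$ in $\Z_4$), and your parenthetical $w_2\le n$ is likewise false. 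The clean argument, which you are missing, is the averaging bound $n\abs{C}=b_1w_1+b_2w_2<(b_1+b_2)w_2=\bigl(\abs{C}-\abs{C_0}\bigr)w_2$, whence $w_2>\frac{n\abs{C}}{\abs{C}-\abs{C_0}}>n$; so $\mu=0$ never occurs, and $\Gamma(C)$ is trivial iff $\mu=K$ iff $w_1=n$.
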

\begin{proof}
The numbers $b_i=\abs{C_i}$ satisfy the system of equations
\begin{equation*}
  \begin{array}{rcrcl}
    b_1&+&b_2&=&\abs{C}-\abs{C_0},\\
    w_1b_1&+&w_2b_2&=&\sum_{\vek{c}\in C}\whom(\vek{c})=n\abs{C},
  \end{array}
\end{equation*}
giving
\begin{equation*}
  \begin{pmatrix}
    b_1\\b_2
  \end{pmatrix}=\frac{1}{w_2-w_1}
  \begin{pmatrix}
    (w_2-n)\abs{C}-w_2\abs{C_0}\\
    (n-w_1)\abs{C}+w_1\abs{C_0}
  \end{pmatrix}\quad\text{and}\quad
  K=b_1/b_0=\frac{(w_2-n)N-w_2}{w_2-w_1}.
\end{equation*}
Let now
$b_{ij}(\vek{d})=\abs{C_i\cap(C_j+\vek{d})}
=\abs{\bigl\{\vek{c}\in C_i;\whom(\vek{c}-\vek{d})
  =w_j\bigr\}}$ for $\vek{d}\in C$ and $0\leq i,j\leq 2$. Our task
is to show that $b_{11}(\vek{d})$ depends only on the class $C_i$
containing $\vek{d}$. The cases $i=0$, $j=0$ and $\vek{d}\in C_0$ are
trivial. The numbers $b_{11}(\vek{d})$, $b_{12}(\vek{d})$ are the
solutions of the system of equations
\begin{align*}
    b_{11}(\vek{d})+b_{12}(\vek{d})&=\abs{C_1}-\abs{C_1\cap(C_0+\vek{d})}
      =
      \begin{cases}
        \abs{C_1}-\abs{C_0}&\text{if $\vek{d}\in C_1$},\\
        \abs{C_1}&\text{if $\vek{d}\in C_2$},
      \end{cases}\\
    w_1\cdot b_{11}(\vek{d})+w_2\cdot b_{12}(\vek{d})&=\sum_{\vek{c}\in C_1}
    \whom(\vek{c}-\vek{d}),
\end{align*}
whose coefficient matrix depends only on $\whom(\vek{d})$; cf.\
Lemma~\ref{lma:whom-coset-code}. This proves already that $\Gamma(C)$
is strongly regular with parameters
$\lambda=b_{11}(\vek{d})/\abs{C_0}$ and $\mu=b_{11}(\vek{d}')/\abs{C_0}$,
where $\vek{d}\in C_1$, $\vek{d}'\in C_2$. Using
Lemma~\ref{lma:whom-coset-code} we find
\begin{align*}
  \lambda&=\frac{w_2(b_1-b_0)-b_1w_1
    -\left(b_1-\frac{b_1w_1}{n}\right)w_1}{(w_2-w_1)b_0}
  =\frac{w_2(b_1/b_0-1)+(b_1/b_0)\left(\frac{w_1^2}{n}-2w_1\right)}{w_2-w_1}\\
  &=\frac{w_2(K-1)+K\left(\frac{w_1^2}{n}-2w_1\right)}{w_2-w_1},\\
  \mu&=\frac{w_2b_1-b_1w_1
    -\left(b_1-\frac{b_1w_1}{n}\right)w_2}{(w_2-w_1)b_0}
  =\frac{w_2(b_1/b_0)+(b_1/b_0)\left(\frac{w_1w_2}{n}-w_1-w_2
    \right)}{w_2-w_1}\\
  &=\frac{w_2K+K\left(\frac{w_1w_2}{n}-w_1-w_2\right)}{w_2-w_1}.
\end{align*}
Writing $\mu$ in the form
\begin{equation*}
  \mu=\frac{K\left(\frac{w_1w_2}{n}-w_1\right)}{w_2-w_1}
\end{equation*}
we see that $\mu=0$ ($\mu=K$) is equivalent to $w_2=n$ (resp.\
$w_1=n$). But
$n\abs{C}=b_1w_1+b_2w_2<\bigl(\abs{C}-\abs{C_0}\bigr)w_2$, so
$w_2>\frac{n\abs{C}}{\abs{C}-\abs{C_0}}>n$. Hence $\Gamma(C)$ is
trivial iff $\mu=K$ iff $w_1=n$.
\end{proof}
\begin{remark}
  \label{rmk:trivial}
  Since $\Gamma(C)$ is a Cayley graph,
  the preceding argument shows that $\Gamma(C)$ is trivial iff the
  codewords of weight $0$ and $w_2$ form a linear subcode of $C$ (and
  the cocliques of $\Gamma(C)$ are the cosets of $(C_0+C_2)/C_0$ in
  this case).
\end{remark}

\section{The Dual of a Modular Two-Weight Code}\label{sec:dual}

Suppose $C\leq{}_RR^n$ is a two-weight code over a finite Frobenius
ring with nonzero
weights $w_1$, $w_2$ and frequencies $b_1$, $b_2$. Let $\mat{M}_i\in
R^{b_i\times n}$ ($i=1,2$) be matrices whose rows are the 
codewords of $C$ of weight $w_i$ in some order.
\begin{definition}
  \label{dfn:dual}
  The right linear code $C'\leq R^{b_1}_R$ generated by the columns of
  $\mat{M}_1$ is called the \emph{dual of the two-weight code $C$}.
\end{definition}
The code $C'$ is modular of index $1$ (no matter whether $C$ is modular or
not).

\begin{theorem}
  \label{thm:dual}
  If $C\leq{}_RR^n$ is a modular two-weight code with $C_0=\{\vek{0}\}$, its
  dual $C'$ is also a (modular) two-weight code with
  $C'_0=\{\vek{0}\}$ and nonzero weights
  \begin{equation}
    \label{eq:dual}
    w_1'=\frac{(w_2-n-r)\abs{C}}{w_2-w_1}=\frac{b_1w_1}{n},\quad 
    w_2'=\frac{(w_2-n)\abs{C}}{w_2-w_1}.
  \end{equation}
\end{theorem}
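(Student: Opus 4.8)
The plan is to evaluate the homogeneous weight of an arbitrary codeword of $C'$ by a short linear‑algebra argument, reusing the correlation identities of Section~2. A codeword of $C'$ has the form $\mat{M}_1\vek{y}$ with $\vek{y}\in R^n$, and since the rows of $\mat{M}_1$ are the words of $C_1$, its weight is $S_1(\vek{y}):=\sum_{\vek{c}\in C_1}\whom(\vek{c}\cdot\vek{y})$. I would recover $S_1$ from the two ``moment'' sums $P(\vek{y})=\sum_{\vek{c}\in C}\whom(\vek{c}\cdot\vek{y})$ and $Q(\vek{y})=\sum_{\vek{c}\in C}\whom(\vek{c})\whom(\vek{c}\cdot\vek{y})$: because $C_0=\{\vek{0}\}$ contributes $\whom(0)=0$ to both, $S_1$ and $S_2:=\sum_{\vek{c}\in C_2}\whom(\vek{c}\cdot\vek{y})$ solve $S_1+S_2=P$, $w_1S_1+w_2S_2=Q$, whence $S_1=(w_2P-Q)/(w_2-w_1)$. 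The whole proof thus reduces to evaluating $P$ and $Q$.

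For $P(\vek{y})$: the map $\vek{c}\mapsto\vek{c}\cdot\vek{y}$ is a homomorphism of left $R$-modules of $C$ onto a left ideal $I\leq{}_RR$ with fibres of size $\abs{C}/\abs{I}$; hence $P=\abs{C}$ when $\vek{y}\notin C^\perp$ (using $\sum_{a\in I}\whom(a)=\abs{I}$ for $I\neq 0$) and $P=0$ otherwise. For $Q(\vek{y})$ I would proceed exactly as in the proof of Lemma~\ref{lma:whom-corr-code}: fix a generator matrix $\mat{G}=(\vek{g}_1|\dots|\vek{g}_n)\in R^{k\times n}$ of $C$, pass to a sum over $R^k$, expand $\whom(\vek{x}\mat{G})=\sum_j\whom(\vek{x}\cdot\vek{g}_j)$, and apply Proposition~\ref{prop:whom-corr-vek} with $s=0$ and $\vek{h}=\mat{G}\vek{y}$. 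For $\vek{h}\neq\vek{0}$ this should give $Q=\abs{C}\bigl(n+\alpha_C(\vek{h}R)/\abs{\vek{h}R^\times}\bigr)$, since the indices $j$ with $\vek{g}_j\sim\vek{h}$ number exactly $\alpha_C(\vek{h}R)$ and satisfy $\abs{\vek{g}_jR^\times}=\abs{\vek{h}R^\times}$; modularity of $C$ (Definition~\ref{dfn:modular}) then forces $\alpha_C(\vek{h}R)/\abs{\vek{h}R^\times}\in\{0,r\}$, the value $r$ occurring precisely when $\vek{h}R$ is a point of $C$. Substituting into $S_1=(w_2P-Q)/(w_2-w_1)$ gives that $\whom(\mat{M}_1\vek{y})$ equals $0$ if $\vek{y}\in C^\perp$, equals $w_2'=(w_2-n)\abs{C}/(w_2-w_1)$ if $\vek{y}\notin C^\perp$ and $\mat{G}\vek{y}R$ is not a point of $C$, and equals $(w_2-n-r)\abs{C}/(w_2-w_1)$ if $\vek{y}\notin C^\perp$ and $\mat{G}\vek{y}R$ is a point of $C$. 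So the nonzero weights of $C'$ lie in $\{w_1',w_2'\}$ with $w_1':=(w_2-n-r)\abs{C}/(w_2-w_1)$.

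After that I would dispatch the routine points. The identity $w_1'=b_1w_1/n$ drops out of the two relations $b_1=\bigl((w_2-n)\abs{C}-w_2\abs{C_0}\bigr)/(w_2-w_1)$ and $b_1-b_1w_1/n=\bigl(r\abs{C}-w_2\abs{C_0}\bigr)/(w_2-w_1)$ from the proof of Lemma~\ref{lma:whom-coset-code}, using $\abs{C_0}=1$. Positivity and distinctness: $w_2>n$ was shown in the proof of Theorem~\ref{thm:Gamma(C)}, so $w_2'>0$; $b_1,w_1,n>0$ give $w_1'>0$; and $w_2'-w_1'=r\abs{C}/(w_2-w_1)>0$ because $r>0$ (from $n=\sum_{\vek{g}R}\alpha_C(\vek{g}R)=r\sum\abs{\vek{g}R^\times}$, the sum over the points of $C$). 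For $C_0'=\{\vek{0}\}$ I would note that $\whom(\mat{M}_1\vek{y})=0$ forces $\vek{y}\in C^\perp$, and then $\mat{M}_1\vek{y}=\vek{0}$ since the rows of $\mat{M}_1$ lie in $C$ and are annihilated on the right by $C^\perp$ (this also exhibits $C^\perp$ as the kernel of $\vek{y}\mapsto\mat{M}_1\vek{y}$, so $\abs{C'}=\abs{C}$ and $C'$ has no zero coordinate). Finally, $\vek{y}=\vek{e}_j$ gives $\mat{G}\vek{e}_j=\vek{g}_j\neq\vek{0}$ with $\vek{g}_jR$ a point of $C$, so the weight $w_1'$ really occurs (it is the weight of the $j$-th column of $\mat{M}_1$), in accordance with $\sum_{\vek{c}\in C_1}\whom(c_j)=b_1w_1/n$ of Remark~\ref{rmk:whom-code}.

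The one step I expect to be the main obstacle is showing that $w_2'$ is actually attained, i.e.\ that $C'$ does not collapse to a one-weight code. Here I would argue by contradiction, invoking Fact~\ref{fact:equidistant}: let $D=\mat{G}R^n\leq R^k_R$; every point of $C$ lies in $\PG(D)$ because the columns $\vek{g}_j$ belong to $D$. If $\alpha_C(\vek{h}R)>0$ for every $\vek{h}\in D\setminus\{\vek{0}\}$, then $\{\vek{g}\in R^k\setminus\{\vek{0}\};\alpha_C(\vek{g}R)>0\}$ would equal $D\setminus\{\vek{0}\}$, the set of nonzero vectors of the submodule $D$; since $C$ is modular, Fact~\ref{fact:equidistant} would then make $C$ equidistant, contradicting the two-weight hypothesis. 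Hence some nonzero $\vek{h}\in D$ has $\alpha_C(\vek{h}R)=0$, and any $\vek{y}$ with $\mat{G}\vek{y}=\vek{h}$ lies outside $C^\perp$ and yields a codeword $\mat{M}_1\vek{y}$ of weight $w_2'$. Together with the previous paragraphs this shows that $C'$ is a two-weight code with $C_0'=\{\vek{0}\}$ and nonzero weights $w_1'<w_2'$ exactly as in~\eqref{eq:dual}.
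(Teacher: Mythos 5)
Your proposal is correct and follows essentially the same route as the paper: you express $\whom(\mat{M}_1\vek{y}^\tp)=\sum_{\vek{c}\in C_1}\whom(\vek{c}\cdot\vek{y})$, solve the same $2\times 2$ moment system whose right-hand sides are evaluated via Proposition~\ref{prop:whom-corr-vek} and modularity (giving the values $(n+r)\abs{C}$ or $n\abs{C}$ according to whether $\mat{G}\vek{y}^\tp$ is similar to some $\vek{g}_j$), and you rule out the degenerate one-weight case by Fact~\ref{fact:equidistant}, exactly as the paper does. The only difference is that you spell out some details the paper leaves implicit (the contradiction argument behind the citation of Fact~\ref{fact:equidistant}, the verification that $w_1'$ occurs, and $C_0'=\{\vek{0}\}$), all of which are fine.
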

\begin{proof}
  Suppose $C$ is generated by $\mat{G}=(\vek{g}_1|\dots|\vek{g}_n)\in
  R^{k\times n}$. The weight of
  $\vek{d}_i=\mat{M}_i\vek{y}^\tp$ ($\vek{y}\in R^n$)
  is $\whom(\vek{d}_i)=\sum_{c\in C_i}\whom(\vek{c}\cdot\vek{y})$. By
  assumption, $C=C_1\uplus C_2\uplus\{\vek{0\}}$. Hence we get the
  following system of equations for $\whom(\vek{d}_1)$ and
  $\whom(\vek{d}_2)$:
  \begin{equation}
    \label{eq:dual-p1}
  \begin{aligned}
    \whom(\vek{d}_1)+\whom(\vek{d}_2)&=\sum_{\vek{c}\in C}
      \whom(\vek{c}\cdot\vek{y}),\\
      w_1\cdot\whom(\vek{d}_1)+w_2\cdot\whom(\vek{d}_2)&=\sum_{\vek{c}\in C}
      \whom(\vek{c})\whom(\vek{c}\cdot\vek{y}).
  \end{aligned}    
  \end{equation}
If $\vek{y}\in C^\perp$ then $\vek{d}_1=\vek{d}_2=0$. Otherwise
\begin{align*}
  \sum_{\vek{c}\in C}
      \whom(\vek{c}\cdot\vek{y})&=\abs{C},\\
      \sum_{\vek{c}\in C}
      \whom(\vek{c})\whom(\vek{c}\cdot\vek{y})
      &=\sum_{j=1}^n\sum_{\vek{c}\in C}
      \whom(c_j)\whom(\vek{c}\cdot\vek{y})\\
      &=\frac{\abs{C}}{\abs{R}^k}
      \sum_{j=1}^n\sum_{\vek{x}\in R^k}\whom(\vek{x}\cdot\vek{g}_j)
      \whom(\vek{x}\mat{G}\vek{y}^\tp)\\
      &=\abs{C}\left(n+\frac{\alpha(\mat{G}\vek{y}^\tp R^\times)}
        {\abs{\mat{G}\vek{y}^\tp R^\times}}\right)\\
      &=
      \begin{cases}
        (n+r)\abs{C}&\text{if $\mat{G}\vek{y}^\tp\sim\vek{g}_j$ for
          some $j$},\\
        n\abs{C}&\text{otherwise}.
      \end{cases}
\end{align*}
\end{proof}
Solving \eqref{eq:dual-p1} for $\whom(\vek{d}_1)$ in both cases gives
\eqref{eq:dual}. As $w_1'$, $w_2'$ are positive, we see that
$C'_0=\{\vek{0}\}$.  Since $C$ is not a one-weight code, there exists
  $\vek{y}\in R^n$ such that $\vek{0}\neq\mat{G}\vek{y}^\tp\notin
  \vek{g}_1R^\times\cup\dots\cup\vek{g}_nR^\times$ (cf.\
  Fact~\ref{fact:equidistant}). Hence both weights $w_1'$, $w_2'$ actually
  occur and the proof of Th.~\ref{thm:dual} is complete.
\begin{theorem}
  \label{thm:Gamma(C')}
  Under the assumptions of Th.~\ref{thm:dual}, the graph $\Gamma(C')$
  is strongly regular with parameters
  \begin{equation*}
    N'=\abs{C},\quad K'=\frac{n}{r},\quad
    \lambda'=\frac{2n-w_1-w_2}{r}+\frac{w_1w_2}{r^2\abs{C}},\quad
    \mu'=\frac{w_1w_2}{r^2\abs{C}}.
  \end{equation*}
  The graph $\Gamma(C')$ is trivial iff $w_1=n$ (\ie\ iff $\Gamma(C)$
  is trivial).
\end{theorem}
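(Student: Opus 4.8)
The plan is to apply Theorem~\ref{thm:Gamma(C)} to the dual code $C'$ rather than re-deriving everything from scratch. By the remark following Definition~\ref{dfn:dual}, $C'$ is modular of index $r'=1$, and by Theorem~\ref{thm:dual} it is a two-weight code with $C'_0=\{\vek{0}\}$ and nonzero weights $w_1'=b_1w_1/n$, $w_2'=(w_2-n)\abs{C}/(w_2-w_1)$; its length is $b_1$ and $\abs{C'}=\abs{R}^{?}$ — more precisely, since $C'$ is generated by the $n$ columns of $\mat{M}_1\in R^{b_1\times n}$, we have $\abs{C'}\le\abs{R}^n$, but what we really need is $\abs{C'}$ expressed via the data of $C$. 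The first step, then, is to compute $\abs{C'}$. Since $C'$ is a modular index-$1$ code of length $b_1$ whose defining multiset has total mass $b_1$, and since $K'=\text{(length)}/r'$ in the notation of Theorem~\ref{thm:Gamma(C)}, I expect $K'=b_1/1=b_1$; but the asserted value is $K'=n/r$, so the identity $b_1=n/r$ must drop out of \eqref{eq:dual}, namely from $w_1'=b_1w_1/n$ together with $w_1'=(w_2-n-r)\abs{C}/(w_2-w_1)$ and the two-weight relation $(w_1+w_2)n\abs{C}=(n^2+rn)\abs{C}+w_1w_2\abs{C}$ (Lemma, using $\abs{C_0}=1$). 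Indeed from these one computes $b_1w_1/n=(w_2-n-r)\abs{C}/(w_2-w_1)$, and separately $b_1=\bigl((w_2-n)\abs{C}-w_2\bigr)/(w_2-w_1)$ from \eqref{eq:b_i}; the claim $rb_1=n$ should follow by eliminating $w_1,w_2$ using the quadratic relation. Establishing $rb_1=n$ cleanly is the first substantive task.

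Next I would substitute into the parameter formulas of Theorem~\ref{thm:Gamma(C)} applied to $C'$:
\begin{gather*}
  N'=\frac{\abs{C'}}{\abs{C'_0}}=\abs{C'},\quad
  K'=\frac{(w_2'-b_1)N'-w_2'}{w_2'-w_1'},\\
  \lambda'=\frac{w_2'(K'-1)+K'\bigl(\tfrac{w_1'^2}{b_1}-2w_1'\bigr)}{w_2'-w_1'},\quad
  \mu'=\frac{K'\bigl(\tfrac{w_1'w_2'}{b_1}-w_1'\bigr)}{w_2'-w_1'},
\end{gather*}
where the length of $C'$ is $b_1$ and its index is $1$. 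So the second step is to verify $N'=\abs{C'}=\abs{C}$. This is the most geometric point: $C'$ has generator matrix $\mat{M}_1$, so $\abs{C'}=\abs{R}^n/\abs{{}^\perp(\text{columns of }\mat{M}_1)}$, and one must show this equals $\abs{C}$. Equivalently, by Proposition~\ref{prop:same-shape}, $\abs{C'}$ equals the cardinality of the left row space of $\mat{M}_1$, which is precisely the left-linear span of the weight-$w_1$ codewords of $C$; since $\Gamma(C)$ is nontrivial (because $w_1\ne n$, which we may assume, otherwise $C'$ is trivial and the statement is about a disjoint union of cliques — handled separately), the weight-$w_1$ codewords generate all of $C$, so that span is $C$ and $\abs{C'}=\abs{C}$. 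For the trivial case $w_1=n$, Remark~\ref{rmk:trivial} identifies the structure directly: $C_0+C_2$ is a subcode, $C_1$ is its complement, and $C'$ is easily seen to be trivial as well. I expect the identification of $\abs{C'}$ with $\abs{C}$ via Proposition~\ref{prop:same-shape} to be the main obstacle, since it is the one place where genuinely new information (not just algebra with the numerical parameters) is needed.

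With $N'=\abs{C}$, $K'=n/r$, and the weights from \eqref{eq:dual} in hand, the remaining step is pure bookkeeping: plug $w_1'=b_1w_1/n=w_1/r$ (using $b_1=n/r$) and $w_2'=(w_2-n)\abs{C}/(w_2-w_1)$ into the $K'$, $\lambda'$, $\mu'$ formulas above and simplify, repeatedly invoking the quadratic relation $(w_1+w_2)n=(n^2+rn)+w_1w_2(\abs{C}-1)/\abs{C}$ — i.e.\ $w_1w_2=rn\abs{C}/(\abs{C}-1)\cdot(\text{something})$; more usefully, the relation rearranges to $w_1w_2\abs{C}=\bigl((w_1+w_2)n-n^2-rn\bigr)\abs{C}+w_1w_2$, hmm, let me instead use it in the form that makes $w_2'-w_1'$, $\tfrac{w_1'w_2'}{b_1}$, etc.\ collapse. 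One checks $\tfrac{w_1'w_2'}{b_1}-w_1'=\tfrac{w_1'}{b_1}(w_2'-b_1)$ and, after substitution and use of the quadratic relation, that $\mu'=w_1w_2/(r^2\abs{C})$; then $\lambda'=\mu'+\tfrac{2n-w_1-w_2}{r}$ follows from the general identity $\lambda'-\mu'=\lambda'-\mu'$ read off the coefficient matrix, i.e.\ from the fact that for any Cayley-graph parameters coming from the two-equation system the difference $\lambda-\mu$ equals $\bigl(w_2'(K'-1)-w_2'K'\bigr)/(w_2'-w_1')+\bigl(\tfrac{w_1'^2}{b_1}-2w_1'\bigr)\cdot\tfrac{K'}{w_2'-w_1'}-\bigl(\tfrac{w_1'w_2'}{b_1}-w_1'\bigr)\tfrac{K'}{w_2'-w_1'}=\tfrac{-w_2'+K'(\tfrac{w_1'^2-w_1'w_2'}{b_1}-w_1')}{w_2'-w_1'}$, which simplifies to $\tfrac{2n-w_1-w_2}{r}$ after inserting $K'=n/r$, $w_1'=w_1/r$. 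Finally, triviality of $\Gamma(C')$ holds iff $w_1'=b_1$ (the ``$w_1=n$'' criterion of Theorem~\ref{thm:Gamma(C)} applied to $C'$), i.e.\ iff $w_1/r=n/r$, i.e.\ iff $w_1=n$, which is the stated equivalence with triviality of $\Gamma(C)$.
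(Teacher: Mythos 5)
Your plan founders on a misreading of Theorem~\ref{thm:Gamma(C)}: that theorem does \emph{not} say $K=\text{length}/\text{index}$; its $K$ is $b_1/b_0$, the number of minimum-weight words divided by $\abs{C_0}$. Consequently your ``first substantive task'', proving $b_1=n/r$, cannot be carried out, because that identity is false in general. Concretely, take $R=\F_2$ and the $[4,3]$ code whose generator columns are the four points of $\PG(2,\F_2)$ off a line: it is modular of index $r=1$ with $C_0=\{\vek{0}\}$, has homogeneous weights $w_1=4$, $w_2=8$, and $b_1=6\neq 4=n/r$; accordingly $w_1'=b_1w_1/n=6\neq w_1/r=4$. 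The identity that is true, and which is exactly what the theorem's $K'$ means, is $b_1'=n/r$, where $b_1'$ is the number of weight-$w_1'$ words of the \emph{dual} $C'$. You cannot reach it by eliminating $w_1,w_2$ from the equations you list; you must first establish $N'=\abs{C'}=\abs{C}$ and then solve $b_1'+b_2'=\abs{C}-1$, $b_1'w_1'+b_2'w_2'=b_1\abs{C}$, using $w_2'-w_1'=\frac{r\abs{C}}{w_2-w_1}$ and $w_2'-b_1=\frac{w_2}{w_2-w_1}$, which follow directly from \eqref{eq:dual} and \eqref{eq:b_i} (no quadratic relation is needed anywhere). Because of this, your later substitutions $w_1'=w_1/r$ and $b_1=n/r$ in the $\lambda'$, $\mu'$ bookkeeping, and your triviality argument via ``$w_1/r=n/r$'', are invalid as written (the triviality equivalence itself is immediate from $w_1'=b_1w_1/n$ and $n'=b_1$, with no such substitution).

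The step you correctly identify as the geometric crux, $\abs{C'}=\abs{C}$, is also left with a hole. You reduce it via Proposition~\ref{prop:same-shape} to ``the weight-$w_1$ codewords generate $C$'', and justify that only by ``since $\Gamma(C)$ is nontrivial''; this implication is nowhere established in the paper and you give no argument for it, nor do you spell out the promised separate treatment of $w_1=n$. The clean argument is already contained in the proof of Theorem~\ref{thm:dual}: for $\vek{y}\notin C^\perp$ the computation there yields $\whom(\mat{M}_1\vek{y}^\tp)=w_1'>0$, so the kernel of $\vek{y}\mapsto\mat{M}_1\vek{y}^\tp$ is exactly $C^\perp$, whence $\abs{C'}=\abs{R}^n/\abs{C^\perp}=\abs{C}$ by the Frobenius property $\abs{C}\,\abs{C^\perp}=\abs{R}^n$; that the weight-$w_1$ words generate $C$ is then a corollary, not an input, and no case distinction on $w_1=n$ is needed. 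With $N'=\abs{C}$ and $K'=b_1'=n/r$ in hand, the remaining parameter computation does go through essentially as you envisage, but as it stands your proposal has a false key identity at its center and an unproven generation claim supporting $N'=\abs{C}$.
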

\begin{proof}
  Since $C'$ is a modular two-weight code with $C'_0=\{\vek{0}\}$, the
  graph $\Gamma(C')=\Gamma(C',C_1')$ is strongly regular. It remains
  to compute the paremeters of $\Gamma(C')$.

  The proof of Th.~\ref{thm:dual} shows that $\vek{y}\notin C^\perp$
  implies $\mat{M}_1\vek{y}^\tp\neq\{\vek{0}\}$. Hence $C$ is
  generated by the codewords of weight $w_1$, $C^\perp=\{\vek{y}\in
  R^n;\mat{M}_1\vek{y}^\tp=\vek{0}\}$  and
  $N'=\abs{C'}=\abs{R^n}/\abs{C^\perp}=\abs{C}$. This in turn gives
  for the frequencies $b'_1$, $b'_2$ the system of equations
  $b'_1+b'_2=\abs{C}-1$, $b'_1w'_1+b'_2w'_2=b_1\abs{C}$. Solving for
  $b'_1$ we obtain, using
$w_2'-b_1=\frac{(w_2-n)\abs{C}}{w_2-w_1}-\frac{(w_2-n)\abs{C}-w_2}{w_2-w_1}
=\frac{w_2}{w_2-w_1}$, $w_2'-w_1'=\frac{(w_2-n)\abs{C}}{w_2-w_1}
-\frac{(w_2-n-r)\abs{C}}{w_2-w_1}=\frac{r\abs{C}}{w_2-w_1}$,
  \begin{equation*}
    K'=b'_1=\frac{(w'_2-b_1)\abs{C}-w'_2}{w'_2-w'_1}
    =\frac{\frac{w_2}{w_2-w_1}\cdot\abs{C}-\frac{w_2-n}{w_2-w_1}\cdot\abs{C}}
    {\frac{r\abs{C}}{w_2-w_1}}=\frac{n}{r}
  \end{equation*}
and further 
\begin{align*}
  \mu'&=\frac{K'\left(\frac{w_1'w_2'}{n'}-w_1'\right)}{w_2'-w_1'}
  =\frac{\frac{n}{r}\left(\frac{w_1'w_2'}{b_1}-w_1'\right)}{w_2'-w_1'}
  =\frac{\frac{nw_1'}{rb_1}(w_2'-b_1)}{w_2'-w_1'}\\
  &=\frac{\frac{w_1}{r}\cdot w_2}{(w_2'-w_1')(w_2-w_1)}
  =\frac{w_1w_2}{r^2\abs{C}},\\
  \lambda'-\mu'
  &=\frac{K'\left(\frac{{w_1'}^2}{n'}-2w_1'\right)+w_2'(K'-1)}{w_2'-w_1'}
  -\frac{K'\left(\frac{w_1'w_2'}{n'}-w_1'\right)}{w_2'-w_1'}\\
  &=-\frac{K'w_1'}{n'}+K'-\frac{w_2'}{w_2'-w_1'}
  =-\frac{\frac{n}{r}\cdot\frac{b_1w_1}{n}}{b_1}+\frac{n}{r}-\frac{w_2-n}{r}
  =\frac{2n-w_1-w_2}{r},\\
  \lambda'&=(\lambda'-\mu')+\mu'
  =\frac{2n-w_1-w_2}{r}+\frac{w_1w_2}{r^2\abs{C}}.
\end{align*}

By Th.~\ref{thm:Gamma(C)}, the graph $\Gamma(C')$ is trivial iff $w_1'=n'$.
But $w_1'=\frac{b_1w_1}{n}$ and $n'=b_1$, so $\Gamma(C')$ is trivial
iff $w_1=n$.
\end{proof}
\begin{theorem}
  \label{thm:equivalence}
  Let $C\leq{}_RR^n$ be a modular linear code over a finite Frobenius
  ring $R$ generated by $\mat{G} =(\vek{g}_1|\dots|\vek{g}_n)\in
  R^{k\times n}$. Let 
  $D\leq R^k_R$ be the right column space of $\mat{G}$.  Suppose
  $C$ has no all-zero coordinate and satisfies $C_0=\bigl\{\vek{c}\in
  C;\whom(\vek{c})=0\bigr\}=\{\vek{0}\}$. Then the following are
  equivalent:
  \begin{enumerate}[(i)]
  \item $C$ is a homogeneous two-weight code;
  \item $\Omega=\vek{g}_1R^\times\cup\dots\cup\vek{g}_n R^\times$ is a partial
    difference set in $(D,+)$ and $\Omega\cup\{\vek{0}\}$ is not a
    submodule of $R^k_R$.
  \end{enumerate}
\end{theorem}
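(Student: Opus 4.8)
The plan is to work entirely on the ``character-sum'' side via the correlation identities already established. The key link is the following dictionary: the word $\vek{x}\mat{G}=(\vek{x}\cdot\vek{g}_1,\dots,\vek{x}\cdot\vek{g}_n)$ has homogeneous weight $\whom(\vek{x}\mat{G})=\sum_{j=1}^n\whom(\vek{x}\cdot\vek{g}_j)$, and the linear functional $\vek{x}\mapsto\vek{x}\mat{G}$ induces a bijection $R^k/D^\perp\to C$ (here $D^\perp={}^\perp C$ in the notation of Section~\ref{sec:properties}, and $\abs{D}=\abs{C}$ by Prop.~\ref{prop:same-shape} applied to $\mat{G}$). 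Restricting to $D$ itself, each $\vek{d}\in D$ can be written $\vek{d}=\mat{G}\vek{y}^\tp$, and the ``weight distribution of $\Omega$ seen from $\vek{d}$'' is exactly $f(\vek{d}):=\abs{\{j;\vek{g}_j\cdot(\text{something})\}}$; more precisely, by Prop.~\ref{prop:whom-corr-vek} the quantity
\begin{equation*}
  \sum_{\vek{x}\in R^k}\whom(\vek{x}\mat{G})\,\whom(\vek{x}\cdot\vek{d})
  \quad\text{with }\vek{d}=\mat{G}\vek{y}^\tp
\end{equation*}
splits as $n^2\abs{R}^k$ plus a term $\frac{\abs{R}^k}{\abs{\vek{g}_jR^\times}}(1-\whom(\vek{0}))\alpha(\vek{g}_jR)$ summed over those $j$ with $\vek{g}_j\sim\vek{d}$; by modularity of index $r$ this collapses to $r\cdot\abs{\{j;\vek{g}_jR=\vek{d}R\}}$ times $\abs{R}^k$. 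Thus, up to the constant $r$, the count of columns proportional to $\vek{d}$ — i.e.\ the multiplicity $\alpha(\vek{d}R)$ with $\vek{d}R$ regarded as a point — is an affine function of the weight distribution of $C$ restricted to the ``coordinate direction'' $\vek{d}$.

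\textbf{Direction (i)$\Rightarrow$(ii).} Assume $C$ is a two-weight code. Take any $\vek{d}\in D\setminus\{\vek{0}\}$; I want to count $\lvert\Omega\cap(\Omega+\vek{d})\rvert$ in the group $(D,+)$. Each element of $\Omega$ is $\vek{g}_ju$ for some coordinate $i$ and unit $u$; writing the difference condition $\vek{g}_iu-\vek{g}_jv=\vek{d}$ and using Remark~\ref{rmk:whom-code} (which gives $\sum_{\vek{c}\in C_1}\whom(c_j+d_j)=\tfrac{b_1w_1}{n}+(b_1-\tfrac{b_1w_1}{n})\whom(d_j)$ per coordinate), one expresses the number of representations of $\vek{d}$ as a difference from $\Omega$ as a fixed affine-linear function of $\whom(\vek{d}^\ast)$ for the codeword $\vek{d}^\ast\in C$ dual to $\vek{d}$ under the pairing — and that takes only the three values $0,w_1,w_2$. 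The value $\whom(\vek{d}^\ast)=0$ is excluded because $C_0=\{\vek{0}\}$ and $\vek{d}\neq\vek{0}$ forces $\vek{d}^\ast\neq\vek{0}$; hence the difference multiplicity of $\vek{d}$ takes exactly two values, one ($\lambda$, say) for $\vek{d}$ coming from a weight-$w_1$ codeword and one ($\mu$) for weight $w_2$ — and whether $\vek{d}\in\Omega$ or not is itself determined by $\alpha(\vek{d}R)$, hence (via the dictionary above) by $\whom(\vek{d}^\ast)$. One must check the matching is the right way round, i.e.\ that $\vek{d}\in\Omega$ corresponds to the weight-$w_1$ value; this is where a short separate computation is needed, comparing $\lvert\Omega\cap(\Omega+\vek{d})\rvert$ for $\vek{d}\in\Omega$ against $\vek{d}\notin\Omega$ using that $\vek{0}$ itself is represented $\lvert\Omega\rvert$ times. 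Also $-\Omega=\Omega$ since $-1\in R^\times$, and $\vek{0}\notin\Omega$ since $C$ has no all-zero coordinate; so $\Omega$ is a regular partial difference set in $(D,+)$. Finally $\Omega\cup\{\vek{0}\}$ is not a submodule: if it were, Fact~\ref{fact:equidistant} would make $C$ a one-weight code, contradiction.

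\textbf{Direction (ii)$\Rightarrow$(i).} Conversely, assume $\Omega$ is a partial difference set in $(D,+)$ with multiplicities $\lambda$ (on $\Omega$) and $\mu$ (on $D\setminus(\Omega\cup\{\vek{0}\})$), and $\Omega\cup\{\vek{0}\}$ is not a submodule. Reverse the dictionary: for $\vek{c}\in C$, write $\vek{c}=\vek{x}\mat{G}$ and compute $\whom(\vek{c})=\sum_j\whom(\vek{x}\cdot\vek{g}_j)$. Group the coordinates by the point $\vek{g}_jR\in\points$; by modularity, $\whom(\vek{c})=r\sum_{\vek{p}\in\points,\alpha(\vek{p})>0}\lvert\vek{p}^\times\rvert^{-1}\alpha(\vek{p})\,S(\vek{x},\vek{p})$ where $S(\vek{x},\vek{p})=\sum_{u\in \vek{p}^\times}\whom(\vek{x}\cdot(\text{representative})u)$ — and the character-sum expansion \eqref{eq:whom} turns the whole thing into a Fourier coefficient of the indicator of $\Omega$ evaluated at $\vek{x}$. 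The partial-difference-set property says precisely that this indicator has a two-valued nontrivial Fourier spectrum (this is the standard equivalence between regular PDS and two-eigenvalue Cayley graphs), and translating back through the same affine relations shows $\whom(\vek{c})$ takes exactly two nonzero values on $C\setminus\{\vek{0}\}$ — two rather than one because $\Omega\cup\{\vek{0}\}$ is not a submodule (again Fact~\ref{fact:equidistant}), and both are nonzero because $C_0=\{\vek{0}\}$.

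\textbf{Main obstacle.} The bookkeeping obstacle is the ring-theoretic subtlety in ``counting representations $\vek{g}_iu-\vek{g}_jv=\vek{d}$'': a point $\vek{g}R$ contributes $\lvert\vek{g}R^\times\rvert=\lvert R^\times/\mathrm{Stab}(\vek{g})\rvert$ associates, not a fixed number, so one must carry the weights $\lvert\vek{g}_jR^\times\rvert$ through every sum — but this is exactly the data that modularity ($\alpha_C(\vek{g}R)=r\lvert\vek{g}R^\times\rvert$) is designed to tame, and Prop.~\ref{prop:whom-corr-vek} and Remark~\ref{rmk:whom-code} already package the needed identities. The genuinely delicate point is verifying that the correspondence ``$\vek{d}\in\Omega\leftrightarrow\whom(\vek{d}^\ast)=w_1$'' holds with $w_1$ and not $w_2$, and more generally that the affine map sending weight values to difference multiplicities is invertible (i.e.\ has nonzero slope) — which fails exactly in the degenerate/one-weight situation that hypothesis (ii)'s non-submodule clause rules out. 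Once the dictionary and this matching are nailed down, both implications are formal manipulations of the two linear systems already used throughout Section~3.
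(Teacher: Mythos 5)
Your direction (ii)$\Rightarrow$(i) is essentially the paper's own argument: expand $\whom$ through characters via \eqref{eq:whom}, observe that each $\vek{x}\in R^k$ induces a character $\vek{g}\mapsto\chi(\vek{x}\cdot\vek{g})$ of $(D,+)$ whose sum over $\Omega$ is two-valued by the partial-difference-set property (\cite[Cor.~3.3]{ma94}), and exclude the one-weight case with Fact~\ref{fact:equidistant}. That half is sound in outline (you should still justify that the induced character is nontrivial exactly when $\vek{x}\mat{G}\neq\vek{0}$, which uses that $\chi$ is a generating character).

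The genuine gap is in (i)$\Rightarrow$(ii). Your argument hinges on ``the codeword $\vek{d}^\ast\in C$ dual to $\vek{d}$ under the pairing,'' but no such association exists: $\vek{d}$ lies in the right module $D\leq R^k_R$, while $C$ is a left module, and there is no natural map $D\to C$ under which membership $\vek{d}\in\Omega$ is read off from $\whom(\vek{d}^\ast)\in\{w_1,w_2\}$. The object that actually performs this role is a codeword of the \emph{dual} code $C'$ of Definition~\ref{dfn:dual}: writing $\vek{d}=\mat{G}\vek{y}^\tp$, the proof of Theorem~\ref{thm:dual} shows $\whom(\mat{M}_1\vek{y}^\tp)=w_1'$ iff $\vek{d}\in\Omega$, and the paper then gets the PDS property by exhibiting a right $R$-module isomorphism $D\to C'$, $\vek{y}^\tp\mapsto\mat{X}\vek{y}^\tp$ with $\mat{X}\mat{G}=\mat{M}_1$, carrying $\Omega$ onto $C_1'$, which is a partial difference set because $\Gamma(C')$ is strongly regular (Theorem~\ref{thm:Gamma(C')}). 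Your appeal to Remark~\ref{rmk:whom-code} does not substitute for this: that remark evaluates $\sum_{\vek{c}\in C_1}\whom(c_j+d_j)$, a coordinatewise sum over $C_1$, and you give no derivation linking it to the difference count $\abs{\Omega\cap(\Omega+\vek{d})}$ inside $(D,+)$. Moreover, the step you explicitly defer --- that the affine relation has nonzero slope and that $\vek{d}\in\Omega$ matches the correct weight value --- is precisely the substance of the implication, so as written the proposal does not establish it. A dual-free repair along your ``character'' dictionary is possible: show that every character of $(D,+)$ has the form $\vek{g}\mapsto\chi(\vek{x}\cdot\vek{g})$ (using that $R$ is Frobenius, so characters of $R^k$ extend/restrict appropriately), deduce from the two-weight hypothesis that all nontrivial character sums of $\Omega$ lie in a two-element set, and then invoke the converse half of the character criterion for partial difference sets in \cite{ma94}; but both of these facts must be stated and proved, and your text does neither.
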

\begin{proof}
  (i)$\oif$(ii): By Th.~\ref{thm:dual} the dual $C'$ is a modular
  two-weight code with $C'_0=\{\vek{0}\}$ and by
  Th.~\ref{thm:Gamma(C')} the graph $\Gamma(C')=\Gamma(C',C_1')$ is
  strongly regular, \ie\ the set $C_1'\subset C'$ of codewords of
  weight $w_1'$ is a partial difference set in $(C',+)$. Let
  $\mat{M}_1\in R^{b_1\times n}$ be the matrix used to define $C'$;
  cf.\ Def.~\ref{dfn:dual}. There exists $\mat{X}\in R^{b_1\times k}$
  with $\mat{XG}=\mat{M}_1$. The proof of Th.~\ref{thm:Gamma(C')}
  shows that $\mat{XG}\vek{y}^\tp=\mat{M}_1\vek{y}^\tp=\vek{0}$
  implies $\mat{G}\vek{y}^\tp=\vek{0}$. Hence
  $f(\vek{y}^\tp):=\mat{X}\vek{y}^\tp$ defines an right $R$-module
  isomorphism $f$ from $D=\{\mat{G}\vek{y}^\tp;\vek{y}\in R^n\}$ to
  $C'=\{\mat{M}_1\vek{y}^\tp;\vek{y}\in R^n\}$. Again by the proof of
  Th.~\ref{thm:Gamma(C')}, $\whom(\mat{M}_1\vek{y}^\tp)=w_1'$ iff
  $\mat{G}\vek{y}^\tp\sim\vek{g}_j$ for some $j$, \ie\ iff
  $\mat{G}\vek{y}^\tp\in\Omega$. In other words, we have
  $f(\Omega)=C_1'$. Clearly this implies that $\Omega$ is a partial
  difference set in $(D,+)$. The second assertion of (ii) follows from
  Fact~\ref{fact:equidistant}.
  
  (ii)$\oif$(i): Since
  \begin{equation*}
    \whom(\vek{x}\mat{G})
    =\sum_{\vek{g}R\in\points}\alpha(\vek{g}R)\whom(\vek{x}\cdot\vek{g})
    =r\sum_{\vek{g}\in\Omega}\whom(\vek{x}\cdot\vek{g}),
  \end{equation*}
  it suffices to show that
  $\vek{x}\to\sum_{\vek{g}\in\Omega}\whom(\vek{x}\cdot\vek{g})$ takes
  no more than two nonzero values on $R^k$.

  By \cite[Cor.~3.3]{ma94} there are $v_1,v_2\in\R$ such that for any
  nontrivial (complex) character $\lambda$ of $(D,+)$ we have
  $\lambda(\Omega)=\sum_{\vek{g}\in\Omega}\lambda(\vek{g})\in\{v_1,v_2\}$. 
  If $\chi$ is a generating character of $R$ then clearly
  $D\to\C^\times$, $\vek{g}\mapsto\chi(\vek{x}\cdot\vek{g})$ is a
  character of $(D,+)$. Hence
  \begin{align*}
    \sum_{\vek{g}\in\Omega}\whom(\vek{x}\cdot\vek{g})
    &=\sum_{\vek{g}\in\Omega}\left(1-\frac{1}{\abs{R^\times}}\sum_{u\in
        R^\times}\chi(\vek{x}\cdot\vek{g}u)\right)\\
    &=\abs{\Omega}-\frac{1}{\abs{R^\times}}\sum_{u\in
        R^\times}\sum_{\vek{g}\in\Omega}\chi(\vek{x}\cdot\vek{g}u)\\
    &=\abs{\Omega}-\sum_{\vek{g}\in\Omega}\chi(\vek{x}\cdot\vek{g})
  \end{align*}
takes only the values $0$ (if $\vek{x}\mat{G}=\vek{0}$),
$\abs{\Omega}-v_1$ or $\abs{\Omega}-v_2$. Hence
$C=\{\vek{x}\mat{G};\vek{x}\in R^k\}$ is a two weight code with nonzero
weights $w_1=r\bigl(\abs{\Omega}-v_1\bigr)$,
$w_2=r\bigl(\abs{\Omega}-v_2\bigr)$. (By Fact~\ref{fact:equidistant}
and the assumption that $\Omega\cup\{\vek{0}\}$ is not a submodule of
$R^k_R$, it cannot be a one-weight code.)
\end{proof}
\begin{remark}
  \label{rmk:equivalence}
  Under the assumptions of Th.~\ref{thm:equivalence} the set
  $\Omega\cup\{\vek{0}\}$ is a submodule of $R^k_R$ iff $C$ is a
  homogeneous one-weight code, and $D\setminus\Omega$ is a submodule
  of $R^k_R$ iff $C$ is a homogeneous two-weight code with
  $w_1=n$. 
\end{remark}


\def\cprime{$'$}

\end{document}